\documentclass[11pt]{amsart}

\usepackage{mathrsfs,graphicx,latexsym,tikz,color,euscript}
\usepackage[T1]{fontenc}
\usepackage{lmodern,mathtools,microtype,amsfonts,amssymb,amsmath,amscd,amsthm}
\usepackage{epstopdf}
\usepackage{hyperref}
\usepackage{upgreek}
\usepackage{comment}
\usepackage{pdfsync}
\usepackage{enumitem}

\newtheorem{theorem}{Theorem}[section]
\newtheorem{lemma}[theorem]{Lemma}
\newtheorem{proposition}[theorem]{Proposition}

\theoremstyle{definition}
\newtheorem{definition}[theorem]{Definition}

\theoremstyle{remark}
\newtheorem{remark}[theorem]{Remark}

\numberwithin{equation}{section}

\newcommand{\rr}{\mathbb R}

\newcommand{\be}{\mathbb E}
\newcommand{\scc}{\mathscr C}

\newcommand{\om}{\omega}
\newcommand{\lmd}{\lambda}

\newcommand{\bn}{\mathbf n}
\newcommand{\bk}{\mathbf k}

\newcommand{\se}{\frac{2}{3}}

\newcommand{\ip}[2]{\langle #1,#2\rangle_m}
\newcommand{\norm}[1]{\lVert #1\rVert_m}
\newcommand{\rnorm}[1]{\lVert #1\rVert}
\newcommand{\dist}{\operatorname{dist}}
\newcommand{\Hess}{\operatorname{Hess}}

\begin{document}

\title[no infinite spin]{No infinite spin in the $n$-body problem in all dimensions}
 

\author{Zhe Wang}
\address{Chern Institute of Mathematics and LPMC, Nankai University, Tianjin, China}
\email{zhewang@mail.nankai.edu.cn}

\author{Guowei Yu}
\address{Chern Institute of Mathematics and LPMC, Nankai University, Tianjin, China}
\email{yugw@nankai.edu.cn}

\thanks{This work is supported by NSFC (No. 12671224), Nankai Zhide Fundation and the Fundamental Research Funds for the Central Universities.}

\begin{abstract} 

In the planar $n$-body problem, Moeckel and Montgomery \cite{MM25} showed that there is no infinite spin for total collision solutions, when the reduced and normalized configuration converges to an isolated central configuration. Following their approach, generalizations have been obtained for partial collision solutions and parabolic solutions in the planar case, and for total collision solutions in the spatial cases by various authors, all under the same assumption that the reduced and normalized configuration converges to an isolated central configuration. 

Here we prove there is no infinite spin for any collision or parabolic solution in all dimensions without such an assumption. 
\end{abstract}

\maketitle

\section{Introduction and main results}
The Newtonian $n$-body problem in $\rr^d$ with $d \ge 1$ studies the motion of $n$ positive point masses, $m_i$, $i \in \bn$ $=\{1, 2, \dots, n\}$, under Newton's universal gravitational law
\begin{equation} \label{eq;n-body}
 m_i\ddot{q_i}=\nabla_iU(q), \; \forall i \in \bn,
\end{equation}
where $q=(q_i)_{i \in \bn} \in\mathbb{R}^{dn}$ with each $q_i$ representing the location of $m_i$, and
$$  U(q)=\underset{i<j}{\sum}\frac{m_im_j}{r_{ij}}, \; r_{ij}=|q_i-q_j|. $$
Due to translation symmetry, we may always fix the center of mass at origin, i.e., 
$$ q \in \be :=\{ q \in \rr^{dn} : \sum_{i \in \bn} m_i q_i =0\}. $$

The global dynamics of the $n$-body problem are deeply affected by the following set of collision configurations, which are singularities of \eqref{eq;n-body}
$$\Delta = \{q\in \rr^{dn}: q_i = q_j \text{ for some } i\neq j\}.$$
As a result, it is important to understand smooth solutions end at a collision configuration at a finite time as defined below. 
\begin{definition}\label{def;k-col}
	\label{df;par-coll} Given any $\bk \subset \bn$ with $|\bk| \ge 2$, we say a solution $q: (0, T) \to \be$, is $\bk$-collision, if it is a collision-free solution for $0< t <T$ with $\lim_{t \to T} q(t) = q^*$, where
	$$ q^* \in \Delta \text{ with } \begin{cases}
		q^*_i = q^*_j, & \forall i \ne j \in \bk; \\
		q^*_i \ne q^*_j, & \forall i \in \bk, j \in \bk':= \bn \setminus \bk. 
	\end{cases}
     $$
\end{definition}
\begin{remark}
    For any subset $\bk$ of indices, $|\bk|$ denotes its cardinality.
	A $\bk$-collision solution is partial, if $|\bk| < n$, and total, if $|\bk|= n$. Notice that there may be more than one partial collision occurring at the same time. 
\end{remark}

To better describe the motion of the masses as they approach a collision, we need to introduce some notations. Given a subset $\bk \subset \bn$, we denote the center of mass of the $\bk$-subsystem as
$$ c_{\bk} = m^{-1}_{\bk} \sum_{i \in \bk} m_i q_i, \text{ where } m_{\bk} = \sum_{i \in \bk} m_i,$$
denote the relative configuration of the $\bk$-subsystem with respect to $c_{\bk}$ as 
$$ q^c_{\bk}= (q_i - c_{\bk})_{i \in \bk} \in \be_{\bk} :=\{ (q_i)_{i \in \bk} \in \rr^{d |\bk|}: \sum_{i \in \bk} m_i q_i =0\}, $$
denote the moment of inertial of the $\bk$-subsystem with respect to $c_{\bk}$ as 
\begin{align}\label{eq;mom-iner}
    I_{\bk}(q^c_{\bk})= \sum_{i \in \bk} m_i|q_i - c_{\bk}|^2,
\end{align}
and denote the normalized relative configuration with respect to $c_{\bk}$ as
$$ \hat{q}^c_{\bk}  = q^c_{\bk}/ \sqrt{I_{\bk}(q^c_{\bk})}  \in S_{\bk}, $$
where 
$$ S_{\bk} := \{ q_{\bk} \in \be_{\bk}: I_{\bk}(q_{\bk})=1 \}.$$
 
The energy of the $\bk$-subsystem is defined as 
\begin{align}\label{eq;sub-energy}
    h_\mathbf{k}=K_\mathbf{k}(q^c_{\bk})-U_\mathbf{k}(q^c_{\bk}) := \underset{i\in\mathbf{k}}{\sum}\frac{m_i}{2}|\dot{q_i}- \dot{c}_{\bk}|^2 -\underset{i,j\in\mathbf{k},i<j}{\sum}\frac{m_im_j}{r_{ij}}.
\end{align}
The potential function between the $\bk$-subsystem and the rest of the masses is denoted by
\begin{equation*}
    U_{\mathbf{k},\mathbf{k^\prime}}=U-U_{\mathbf{k}}-U_{\mathbf{k^\prime}}=\underset{i\in\mathbf{k},j\in\mathbf{k^\prime}}{\sum}\frac{m_im_j}{r_{ij}}. 
\end{equation*}

\begin{definition}
\label{df;cc} $q_{\bk} = (q_i)_{i \in \bk} \in \be_{\bk}$ is a central configuration or CC of the $\bk$-subsystem, if there is a constant $\lmd >0$, such that 
$$ \nabla_{i} U_{\bk}(q_{\bk}) + \lmd m_i q_i = 0, \; \forall i \in \bk.$$
We denote the set of all CCs of the $\bk$-subsystem by $\mathscr{C}_{\bk}$ and the subset of normalized CCs as 
$$ \hat{\mathscr{C}}_{\bk} = \mathscr{C}_{\bk} \cap S_{\bk}. $$
\end{definition}
\begin{remark}
	$\hat{\mathscr{C}}_{\bk}$ is precisely the set of critical points of partial potential $U_{\bk}$ restricted on $S_{\bk}$. 
\end{remark}

If $q(t)$ is a $\bk$-collision solution as in Definition \ref{df;par-coll}, let $\om(\hat{q}^c_{\bk})$ denote the limit set of $\hat{q}^c_{\bk}(t)$, as $t \to T$. It is well-known (see Proposition \ref{prop;asym-col}) that $\om(\hat{q}^c_{\bk})$ must be contained in $\hat{\scc_{\bk}}$, the normalized CC of the $\bk$-subsystem. Then the following question seems to be natural and important.  

\emph{\textbf{Question 1}: Is $\om(\hat{q}^c_{\bk})$ is a point set, or equivalent does $\hat{q}^c_{\bk}(t)$ converge to a particular CC, as $t \to T$?}

\begin{remark}
    When $d=1$ it is well-known the answer to the above question is Yes, so below we will always assume $d \ge 2$. 
\end{remark}

The first difficulty of the question is due to rotational symmetry. As the $n$-body problem is invariant under such symmetry, if $q_{\bk} \in \hat{\mathscr{C}}_{\bk}$, so is $g (q_{\bk})= (g(q_i))_{i \in \bk}$, for any rotation $g \in SO(d)$. Denote the quotient map $S_{\bk} \to S_{\bk}/SO(d)$ as $q_{\bk} \mapsto [q_{\bk}]$, and the corresponding potential function induced by $U_{\bk}$ as $[q_{\bk}] \mapsto U_{\bk}([q_{\bk}])$. We say $q_{\bk}$ is an isolated (resp. nondegenerate or degenerate) CC, if $[q_{\bk}]$ is an isolated (resp. nondegenerate or degenerate) critical point of $U([q_{\bk}])$. Then we can ask the following weaker version of \textbf{Question 1}. 

\emph{\textbf{Question 2}: Under the assumption that $[\hat{q}^c_{\bk}(t)]$ converges to an isolated CC when $t \to T$, is $\om(\hat{q}^c_{\bk})$ is a point set?}

Usually it is the second question that people refer to as \emph{the infinite spin problem}. In particular for the planar case, when $\om(\hat{q}^c_{\bk})$ is either a point set or an entire circle, and in the latter case the masses have to go around it infinite many time, when they approach to the collision.
In this paper, we shall refer to both question as the problem of infinite spin.  

The assumption in \textbf{Question 2} is crucial because the following question is still widely open, and in particular the planar case is listed by Smale \cite{Sm98} as problem 6 in his list of problems for the 21st century..

\emph{For the $n$-body in $\rr^d$ $(d \ge 2)$ with arbitrary choice of mass, are there only finitely many normalized CCs up to the rotational symmetry?}

If the answer to the above question is affirmative, then very CC is isolated up to rotational symmetry. However so far results are only available for small $n$. For $n=3$, it is a classical result due to Euler and Lagrange, for $n=4$, it was proven by Hampton and Moeckel \cite{HM06}, and for $n=5$, it was proven by Albouy and Kaloshin \cite{AK12} for generic choice of mass. For $n=6$, there are some partial progresses made by Chang and Chen \cite{CC24}. 

So far almost all the available results on infinite spin are about \textbf{Question 2}. Except when $n \le 4$, in which case it is known that there are only finitely many CC up to symmetry. There were also some failed attempts in the literatures. The breakthrough is the work by Moeckel and Montgomery \cite{MM25}, where they proved that there is no infinite spin for total collision solutions in the planar case under the assumption given in \textbf{Question 2}. Then their result was generalized to partial collision solutions in the planar case by Gierzkiewicz, Schaefer and Zgliczynski. \cite{GSZ24} and the authors \cite{WY25} under the same assumption. More recently Pinzari and Zgliczynski \cite{PZ26} studied \textbf{Question 2} for total collision solutions in the spatial case, and Yu and Zhao \cite{YZ26} studied \textbf{Question 2} for total collision solution in the general $d$-dimensional cases assuming the dimension of the limiting CC is $d$ or $d-1$.  

Here we study partial or total collision solutions in all dimensions and prove the following result which answers \textbf{Question 1} positively. 

\begin{theorem}
\label{thm;coll}  If $q(t)$ is a $\bk$-collision solution, the normalized relative configuration $q^c_{\bk}(t)$ converges to a particular CC in $\mathscr{C}_{\bk}$ as the masses approach to the $\bk$-collision. In particular there is no infinite spin. 
\end{theorem}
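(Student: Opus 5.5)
The plan is to work in McGehee-type blow-up coordinates for the $\bk$-subsystem and show that the normalized configuration has finite arc length as $t \to T$. Finite length forces the limit set to be a single point, whether or not the limiting CC is isolated. Set $r = \sqrt{I_{\bk}(q^c_{\bk})}$ and $s = \hat{q}^c_{\bk} \in S_{\bk}$. Introduce the rescaled time $d\tau = r^{-3/2}dt$, so that $t \to T$ corresponds to $\tau \to \infty$. Write the rescaled velocity as $r^{1/2}\dot q^c_{\bk} = v\, s + w$, with $w$ tangent to $S_{\bk}$ at $s$.

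By Proposition \ref{prop;asym-col} (the classical Sundman-type asymptotics) we know several things. First, $r \sim c(T-t)^{2/3}$. Second, $v \to -\sqrt{2U_{\bk}(s^*)}$ along the limit set. Third, $w \to 0$. Finally, $U_{\bk}(s(\tau))$ converges to a single value $U^*$, the common critical value on the connected set $\om(\hat{q}^c_{\bk}) \subset \hat{\scc}_{\bk}$. The interaction with $\bk'$, namely the forces from $U_{\bk,\bk'}$ and the motion of $c_{\bk}$, enters the rescaled equations only through terms of size $O(r^{3}) = O(e^{-\delta\tau})$, because the $\bk'$ bodies stay at positive distance. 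These perturbations are integrable in $\tau$ and can be carried along throughout.

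The key step is to build a Lyapunov function whose decay controls the arc length. The rescaled system is a gradient-like flow: there is an energy-type function $\mathcal E = \tfrac12(v^2+\|w\|^2) - U_{\bk}(s)$ (up to an exponentially small correction), and along orbits
$$ \frac{d}{d\tau}\Big(\tfrac12\|w\|^2 - U_{\bk}(s)\Big) \approx \tfrac12 v \|w\|^2 \le -c\|w\|^2 , $$
while $s' = w$. The function $L = U^* - U_{\bk}(s) + \tfrac12\|w\|^2$, perhaps plus a small cross term $\epsilon\langle w,\nabla_S U_{\bk}(s)\rangle$ to also capture the gradient, then satisfies $L' \le -c(\|w\|^2 + \|\nabla_S U_{\bk}(s)\|^2) + O(e^{-\delta\tau})$. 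Here $\nabla_S$ denotes the gradient on the sphere; the cross term supplies the gradient part because $w' \approx \tfrac{-v}{2}(\ldots)$ contains $\nabla_S U_{\bk}(s)$ plus damping.

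Next, $U_{\bk}$ is real analytic on $S_{\bk}$ away from $\Delta$, and $\om(\hat{q}^c_{\bk})$ is a compact subset of the critical set, away from collisions. So the Łojasiewicz gradient inequality holds uniformly near $\om(\hat{q}^c_{\bk})$: $|U_{\bk}(s)-U^*|^{1-\theta} \le C\|\nabla_S U_{\bk}(s)\|$. This replaces the isolatedness hypothesis of Moeckel–Montgomery, and it is insensitive to the $SO(d)$ degeneracy since it applies on $S_{\bk}$ itself rather than on the quotient. The standard Łojasiewicz–Simon argument for damped second-order systems (Haraux–Jendoubi) then gives
$$ -\frac{d}{d\tau} L^{\theta} \ge c\,(\|w\| + \|\nabla_S U_{\bk}\|) - O(e^{-\delta'\tau}). $$
Integrating yields $\int^\infty \|s'(\tau)\|\,d\tau < \infty$, so $s(\tau)$ converges to a single point. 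That point lies in $\hat{\scc}_{\bk}$, which proves the theorem.

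The main obstacle I expect is handling the non-autonomous, non-exact pieces within the Łojasiewicz argument. The coefficient $v$ only converges to its limit, the $\bk'$ perturbations are present, and $L$ may change sign. To deal with this I would absorb the exponentially small terms into a modified function $\tilde L = L + Ce^{-\delta\tau}$, which stays positive. I would also use the fact that $v$ is bounded away from $0$ for large $\tau$, so the damping is uniform. Then I would check that $\tilde L^{\theta}$ still has the required derivative bound, applying the Łojasiewicz inequality in the form $|L|^{1-\theta}\le C(\|w\|+\|\nabla_S U_{\bk}\|)$ for $\theta\le 1/2$.
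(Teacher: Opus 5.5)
Your plan is essentially the paper's proof. It uses the McGehee blow-up with $d\tau=r^{-3/2}dt$, the asymptotics of Proposition~\ref{prop;asym-col}, and a \L ojasiewicz inequality made uniform near the compact critical set $\Omega\subset S_{\bk}$ by a finite cover (Lemma~\ref{lem;L-inq}, applied on $S_{\bk}$ itself rather than on the quotient). It then integrates an energy-plus-cross-term Lyapunov function with an exponential correction through $E^{\theta}$, as in Theorem~\ref{thm;finite-length-analytic-gradient-equation} with $a=v/2$.

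There is one sign slip. From $D_\tau w=\nabla_{S_{\bk}}U_{\bk}(s)-\tfrac12 vw+e$ one gets $\frac{d}{d\tau}\bigl(\tfrac12\norm{w}^2-U_{\bk}(s)\bigr)=-\tfrac12 v\norm{w}^2+\ip{e}{w}$, not $+\tfrac12 v\norm{w}^2$. Since $v\to v_0<0$ at a collision, the term $-\tfrac12 vw$ is anti-damping in forward $\tau$. Consequently $L+\epsilon\ip{w}{\nabla_{S_{\bk}}U_{\bk}(s)}$ has derivative $\ge c\bigl(\norm{w}^2+\norm{\nabla_{S_{\bk}}U_{\bk}(s)}^2\bigr)-O(e^{-\delta\tau})$: it increases to $0$ rather than decreasing, and your $\tilde L$ is not positive.

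The fix is to run your argument on the negative of that function plus $Ce^{-\delta\tau}$. This is exactly the paper's choice $E_0=-H-\varepsilon\langle g,u\rangle$ for negative $a$. After that flip, the rest of your argument goes through unchanged.
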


Now instead of collision solutions that end at a finite time, we shall consider solutions exist for all forwarding time. In \cite{Chazy22}, Chazy gave a complete classification of the final motion for the three body problem. For $n \ge 3$, it was studied by Pollard \cite{Pl67}, Saari \cite{Sr71}, and Marchal and Saari \cite{MS76}. Among the possible final motions, the parabolic motion defined as below is the critical case that separates the other cases. 
\begin{definition}\label{def;k-para}
\label{df;k-para} Given any $\bk \subset \bn$ with $|\bk| \ge 2$, we say a solution $q: (t_0, \infty) \to \be$ is $\bk$-parabolic, if there exist positive constants $C_i$, $i =1, 2, 3$, such that when $t \to \infty$, 
$$ 
\begin{cases}
C_1 t^{\se} \le r_{ij}(t) \le C_2 t^{\se}, & \; \forall i \ne j \in \bk; \\
r_{ij}(t) \ge C_3 t, & \; \forall i \in \bk, \forall j \in \bk':= \bn \setminus \bk.
\end{cases} 
$$
\end{definition}

\begin{remark}
 A $\bk$-parabolic solution is partially parabolic, if $|\bk| < n$, and complete parabolic, if $|\bk|=n$. Notice that there may be more than one subsystem, such that inside it the motion is parabolic. 
\end{remark}
It is an interesting phenomena that for a $\bk$-parabolic solutions defined as above, when the masses from the $\bk$-subsystem are far away from each other, they behave quite similar to when they are very close to each other, or a $\bk$-collision. More precisely if $q(t)$ is a $\bk$-parabolic solution defined as above, when $t \to \infty$, the normalized relative configuration $\hat{q}^c_{\bk}(t)$ must approach to $\hat{\mathscr{C}}_{\bk}$ as well (see Proposition \ref{prop;asym-para}). 

Hence equations similar to \textbf{Question 1} and \textbf{Question 2} can be asked for a $\bk$-parabolic solution, where in this case $\om(\hat{q}_{\bk}^c)$ should be seen as the limiting set of $\hat{q}_{\bk}^c(t)$, when $t \to \infty$. In this case, the only available result seems to be \cite{WY25}, where the authors answer the parabolic version of \textbf{Question 2} in the planar case. Here we answer the parabolic version of \textbf{Question 1} by proving the following result.

\begin{theorem}
\label{thm;para}  If $q(t)$ is a $\bk$-parabolic solution, the normalized relative configuration $q^c_{\bk}(t)$ converges to a particular CC in $\mathscr{C}_{\bk}$, when time goes to infinity. In particular there is no infinite spin. 
\end{theorem}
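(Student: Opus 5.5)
The plan is to prove convergence of $\hat{q}^c_{\bk}(t)$ by a Łojasiewicz-type argument for a damped gradient-like system. This avoids any isolatedness assumption, because $U_{\bk}$ restricted to $S_{\bk}$ is real analytic.

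\emph{Step 1: McGehee-type coordinates.} Write $r=\sqrt{I_{\bk}(q^c_{\bk})}$ and $s=\hat{q}^c_{\bk}\in S_{\bk}$. Introduce the rescaled radial velocity $v=r^{1/2}\dot r$ and the rescaled tangential velocity $w=r^{3/2}\dot s$, together with the new time $d\tau=r^{-3/2}dt$. By Definition \ref{df;k-para}, $r\sim t^{2/3}$, so $\tau\sim c\log t\to\infty$. In these variables the $\bk$-subsystem becomes, up to error terms,
\begin{align*}
s'&=w,\qquad w'=-\tfrac12 v\,w+\nabla_S U_{\bk}(s)-|w|^2 s+P_1,\\
v'&=|w|^2+\tfrac12 v^2-U_{\bk}(s)+P_2 .
\end{align*}
The terms $P_1,P_2$ collect the interaction with the $\bk'$-bodies, coming from $U_{\bk,\bk'}$, and the non-conservation of $h_{\bk}$. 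The energy relation reads $\tfrac12(v^2+|w|^2)-U_{\bk}(s)=r h_{\bk}$.

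\emph{Step 2: the perturbations are integrable.} I will show $\int^\infty(|P_1|+|P_2|)\,d\tau<\infty$. The mutual distances to $\bk'$ are at least $C_3 t$, while the $\bk$-diameter is $O(t^{2/3})$. Expanding $U_{\bk,\bk'}$ around $c_{\bk}$, the tidal force on $q^c_{\bk}$ is therefore $O(r/t^{3})$. After rescaling by $r^{2}$ and converting $dt$ to $d\tau$, this becomes $O(t^{-c})=O(e^{-c'\tau})$, which is integrable. The same estimate controls $\dot h_{\bk}$. This gives $r h_{\bk}\to 0$, and the energy-relation defect is integrable.

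Propositions \ref{prop;asym-para} (and its proof) give that $v\to v_*\ne 0$ and $w\to 0$. They also give that the $\omega$-limit set of $s$ is a compact connected subset of $\hat{\scc}_{\bk}$. The value $U_{\bk}$ is constant, equal to $U^*=v_*^2/2$, on this set, since $U_{\bk}(s)\to\tfrac12 v_*^2$ by the energy relation. In the collision case of Theorem \ref{thm;coll}, $\tau\to\infty$ as $t\to T$ and the perturbation from $\bk'$ is $O(r^{3/2})$ in $\tau$-time. Since $r$ decays exponentially in $\tau$, these perturbations are integrable there as well.

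\emph{Step 3: Łojasiewicz argument (main step).} Near the compact limit set, the analytic function $U_{\bk}|_{S_{\bk}}$ satisfies a uniform Łojasiewicz inequality
$$|\nabla_S U_{\bk}(s)|\ge c\,|U_{\bk}(s)-U^*|^{1-\theta},\qquad \theta\in(0,\tfrac12].$$
Because $v\to v_*\neq0$, the term $-\tfrac12 v w$ is a genuine damping; for the collision orientation of time one reverses $\tau$ if needed. Following Haraux--Jendoubi, I take the modified Lyapunov function
$$E=\tfrac12|w|^2-(U_{\bk}(s)-U^*)-\varepsilon\langle\nabla_S U_{\bk}(s),w\rangle,$$
adjusted by an integrable correction $\int_\tau^\infty(|P_1|+|P_2|)$ so that it becomes monotone. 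Using $E'\le -c(|w|^2+|\nabla_S U_{\bk}|^2)$ up to integrable error, together with the Łojasiewicz inequality, I would show $-\frac{d}{d\tau}|E|^{\theta}\ge c(|w|+|\nabla U|)$ minus an integrable error. This yields $\int^\infty|w|\,d\tau<\infty$, i.e.\ $s(\tau)$ has finite length, so $s$ converges to a single normalized CC. Hence $q^c_{\bk}(t)$ converges in the sense of the theorem, and in particular there is no infinite spin.

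The main obstacle is handling the non-autonomous perturbation inside the Łojasiewicz argument. The standard trick fails if the errors are merely integrable while $E$ can change sign. I would handle it by comparing with $|E|^\theta+\big(\int_\tau^\infty \text{err}\big)^{\theta}$, in the spirit of results on asymptotically autonomous gradient-like systems. This requires the error to decay at least like $e^{-c\tau}$, which is exactly what Step 2 provides; the key point is to get exponential decay in $\tau$, not just integrability.
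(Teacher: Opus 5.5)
Your proposal is correct and is essentially the paper's argument: McGehee-type coordinates with $d\tau=r^{-3/2}dt$, Proposition~\ref{prop;asym-para} to place the limit set of $s$ in a compact set of critical points at level $U_0=v_0^2/2$, the uniform \L ojasiewicz inequality of Lemma~\ref{lem;L-inq}, and the Haraux--Jendoubi energy $\tfrac12\norm{w}^2-(U_{\bk}-U_0)-\varepsilon\ip{\nabla_{S_{\bk}}U_{\bk}}{w}$ plus an exponentially small correction. This is Theorem~\ref{thm;finite-length-analytic-gradient-equation} with $a=v/2>0$ and $e=r^2(f-\ip{s}{f}s)=O(r^3t^{-3})=O(t^{-1})$, which is exponentially small in $\tau\sim\log t$, exactly as you identify; correcting by the tail $\int_\tau^\infty$ of the error instead of by $Be^{-2\kappa\tau}$ is an equivalent bookkeeping choice.

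The one step you compress is $rh_{\bk}\to0$. Since $r\to\infty$ here, the bound on $\dot h_{\bk}$ alone only gives $h_{\bk}\to h_\infty$. Getting $h_\infty=0$, and hence $h_{\bk}=O(t^{-5/3})$, needs the parabolic growth hypothesis: $h_\infty<0$ is excluded by $K_{\bk}\ge0$ and $U_{\bk}(z)\to0$, and $h_\infty>0$ is excluded by Lagrange--Jacobi. The paper also states this estimate without proof.
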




All the previously mentioned works essentially all followed the approach given by Moeckel and Montgomery \cite{MM25}, which is first to obtain a reduced system by modulo rotational symmetry, then use the \L ojasiewicz gradient inequality to get a finite length theorem which guarantees the desired convergence. For this to work, one needs the assumption that the corresponding CC is isolated in the reduced system to use center manifold theorem.

Our main novelty here is instead of the reduced system, we investigate the original system directly. To make it work, we first need to generalize the classical \L ojasiewicz gradient inequality to a version that works near a small neighborhood of a compact set. Then we use it to prove a finite length theorem for a particular second order gradient equation that fits the study of collision and parabolic solution in the $n$-body problem (see Section \ref{sec:finite-length-analytic-gradient-equation}). As a result, our proof works in all dimensions without the assumption on isolation, and hence given an affirmative answer to \textbf{Question 1} for both collision and parabolic solutions. 



\section{Equations of Subsystem} \label{sec;co-tran}

Assume $q(t)$ is a solution of \eqref{eq;n-body} and choose an arbitrary $\bk \subset \bn$. Without loss of generality, let's assume $\bk=\{1,...,k\}$ for some $2\le k \le n$. Now we shall rewrite the equations of the subsystem $q_{\bk}$ as time-dependent differential equations. 

First define the external field as
\begin{align}\label{eq;external-filed}
    F_t(x)=\sum_{j\notin K}m_j\frac{q_j(t)-x}{|q_j(t)-x|^3},
\end{align}

and set $f(t)=(f_1(t), ..., f_k(t))$, where
\begin{align}\label{eq;perturbation-term}
    f_i(t)=F_t(q_i(t))-m_{\bk}^{-1}\sum_{\ell\in \bk}m_\ell F_t(q_\ell(t)), \qquad i\in \bk.
\end{align}

Define $z_i=q_i-c_\bk\in\mathbb{R}^d$ and 
$z=(z_1,...,z_{k}) \in \be_\bk \subset \rr^{dk}$. Since the mutual distances can be expressed by $z$, the potential function $$U_\mathbf{k}(z)=\sum_{\substack{i<j\\i,j\in \bk}}\frac{m_im_j}{|z_i-z_j|}$$ is still an analytic homogeneous function on $\mathbb{R}^{dk} \setminus \Delta_\mathbf{k}$, where $\Delta_{\bk}$ denote the set of collision configurations in the $\bk$-subsystem.

Then the differential equation of the $\bk$-subsystem is 
\begin{align}
    \ddot{z}_i=\nabla_i U(q)-m_{\bk}^{-1}\sum_{j\in\bk}m_j\nabla_j U(q)=\nabla_i U_{\bk}(z)-f_i(t),  \; \forall i \in \bk
\end{align}
or equivalently
\begin{align}\label{eq;eq-subsystem}
    \ddot{z}=\nabla U_{\bk}-f(t).
\end{align} 

Further introduce the mass metric $\| \cdot \|_{m}$ on $\be_{\bk}$ as induced by the following inner product 
$$\ip{v}{w}:=v^T M w,\ \ \ \ \ \ \forall v,w\in \be_\bk,$$
where $M=diag(m_1, \dots, m_1,...,m_k, \dots, m_k)$ is the $dk \times dk$ mass matrix. Then we can define
\begin{equation}
r=\norm{z},\quad s=\frac{z}{r}\in S_{\bk}. \label{eq;r-s}
\end{equation}

The following Lemma gives the uniform estimates of the perturbation term $f(t)$ near a $\bk$-collision or near infinity that will be needed.
\begin{lemma}\label{lem;est-ft}
    If $q(t)$ is a $\bk$-collision solution on $(0, T)$, then for $t_0<T$ sufficiently close to $T$, there exists a constant $C>0$ such that
    \begin{align}\label{eq;est-ft-col}
        \norm{f(t)}\le C r(t), \qquad t_0\le t\le T.
    \end{align}
    If $q(t)$ is a $\bk$-parabolic solution on $(0, \infty)$, then for sufficiently large $t_0$, there exists a constant $C>0$ such that
    \begin{align}
        \norm{f(t)}\le C r(t)t^{-3}=O(t^{-7/3}), \qquad t\ge t_0.
    \end{align}
\end{lemma}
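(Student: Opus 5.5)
The estimate comes from the observation that $f_i(t)$ is a difference of values of the external field $F_t$ at points of the $\bk$-cluster. This is exactly what makes it small when the cluster is small, or when it is far from the other masses. Write
$$ f_i(t)=m_{\bk}^{-1}\sum_{\ell\in\bk} m_\ell\bigl(F_t(q_i(t))-F_t(q_\ell(t))\bigr), $$
using $\sum_{\ell} m_\ell = m_{\bk}$. Each difference will be controlled by the mean value theorem along the segment joining $q_\ell$ to $q_i$:
$$|F_t(q_i)-F_t(q_\ell)|\le \sup_{x\in[q_\ell,q_i]}|DF_t(x)|\,|q_i-q_\ell|,\qquad |DF_t(x)|\le C\sum_{j\notin\bk}\frac{m_j}{|q_j(t)-x|^{3}}.$$
Since $|q_i-q_\ell|=|z_i-z_\ell|\le C\norm{z}=Cr$ (all norms on $\be_\bk$ are equivalent), it remains to bound $|q_j(t)-x|$ from below for $j\notin\bk$ and $x$ on the segment, which lies in the convex hull of the cluster.

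\emph{Collision case.} By Definition \ref{def;k-col}, $q(t)\to q^*$ with $q^*_i=q^*_j$ for $i,j\in\bk$ and $q^*_i\neq q^*_j$ for $i\in\bk$, $j\in\bk'$. Let $\delta=\min_{i\in\bk,j\in\bk'}|q^*_i-q^*_j|>0$. For $t_0$ close to $T$, every $q_i(t)$ with $i\in\bk$ lies within $\delta/4$ of the common point $q^*_1$, and every $q_j(t)$ with $j\in\bk'$ lies within $\delta/4$ of $q^*_j$. The segment $[q_\ell,q_i]$ lies in the ball of radius $\delta/4$ around $q^*_1$, so $|q_j(t)-x|\ge\delta/2$ there. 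Hence $|DF_t|\le C\delta^{-3}$ on the segment, and summing the differences gives $\norm{f(t)}\le Cr(t)$ on $[t_0,T)$.

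\emph{Parabolic case.} Here Definition \ref{def;k-para} gives $r_{ij}\ge C_3t$ for $i\in\bk$, $j\in\bk'$, and $r_{i\ell}\le C_2t^{2/3}$ inside $\bk$. Any point $x$ on $[q_\ell,q_i]$ is within $C_2t^{2/3}$ of $q_i$, so
$$|q_j(t)-x|\ge C_3t-C_2t^{2/3}\ge \tfrac{C_3}{2}t$$
for $t$ large. Hence $|DF_t(x)|\le Ct^{-3}$, and as before $\norm{f(t)}\le C r(t)t^{-3}$. Finally, $r(t)=\norm{z}\le C\max_{i,\ell\in\bk}r_{i\ell}\le Ct^{2/3}$, which gives $O(t^{-7/3})$.

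The only step needing care is the uniform lower bound on the distance from the outside masses to the \emph{whole segment}, not just to its endpoints. Convexity of a small ball handles the collision case, and the triangle inequality handles the parabolic case, so no real obstacle is expected. One also uses $|q_i-q_\ell|\le Cr$, which follows from $|z_i-z_\ell|\le |z_i|+|z_\ell|$ and $m_i|z_i|^2\le r^2$.
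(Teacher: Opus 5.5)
Your proposal is correct and follows essentially the same route as the paper: you write $f_i$ as the mass-weighted average of the differences $F_t(q_i)-F_t(q_\ell)$ and apply the mean value inequality with $\|DG(y)\|\le 2|y|^{-3}$ for $G(y)=y/|y|^3$. You then bound the distance from each outside mass to the segment $[q_\ell,q_i]$ from below (by a constant near collision, by a multiple of $t$ in the parabolic case) and finish with $|z_i|\le r/\sqrt{m_i}$ and $r=O(t^{2/3})$. Your small-ball argument around $q^*$ in the collision case is just a more explicit version of the paper's bound $|\theta(q_j-q_i)+(1-\theta)(q_j-q_\ell)|\ge |q_j-q_\ell|-\theta|q_i-q_\ell|$.
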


\begin{proof}
    Let $G(y)=y/|y|^3$ be a vector value function and $y\in\rr^d$ is a column vector. The differentiation gives
$$
 DG(y)=|y|^{-3}I-3|y|^{-5}yy^{\mathsf T},
 \qquad \|DG(y)\|\le 2|y|^{-3}.
$$
Here differential $DG$ can be seen as a matrix function, and $I$ is the identity matrix. 
From \eqref{eq;external-filed}, 
$$|F_t(q_i)-F_t(q_\ell)|\le \sum_{j\notin \bk}m_j|G(q_j-q_i)-G(q_j-q_{\ell})|,\quad i,\ell\in\bk.$$
In collision case, since $q_i(t)-q_{\ell}(t)\to 0$ and $|q_j(t)-q_{\ell}(t)|$ is bounded as $t\to T$, for $\theta\in (0,1)$, there exists $C_1>0$ and $t_0<T$ such that
$$
|\theta(q_j-q_i)+(1-\theta)(q_j-q_{\ell})|\ge |q_j-q_{\ell}|-\theta|q_i-q_{\ell}|\ge C_1,\qquad  t_0\le t\le T.
$$ 
Similarly in parabolic case, there exists $C_1>0$ and $t_0>0$ such that
$$
|\theta(q_j-q_i)+(1-\theta)(q_j-q_{\ell})|\ge C_1 t,\qquad t\ge t_0.
$$
Then the mean-value estimate gives 
$$
    |F_t(q_i)-F_t(q_\ell)|\le 
    \begin{cases}
    C_2|z_i-z_\ell|,\quad t_0\le t\le T  &\bk\text{-collision},\\
 C_2t^{-3}|z_i-z_\ell|,\quad t\ge t_0 &\bk\text{-parabolic},
 \end{cases}
$$
for some constant $C_2>0$.
Now write $f_i=m_{\bk}^{-1}\sum_\ell m_\ell(F_t(q_i)-F_t(q_\ell))$ and use $|z_i|\le r/\sqrt{m_i}$, we have $\norm{f}\le C r(t)$ in collision case and $\norm{f}\le C r(t) t^{-3}$ in parabolic case. Finally, the identity
\begin{equation}\label{eq;pair-identity}
 r^2=m_{\bk}^{-1}\sum_{\substack{i<j\\i,j\in \bk}}m_im_j|z_i-z_j|^2
\end{equation}
and $|z_i|\le r/\sqrt{m_i}$ implies $r\sim t^{2/3}$ in the parabolic case. Here and below $A\sim B$ means $A/B$ are bounded below and above by finite constants. This gives the estimate $\norm{f}=O(t^{-7/3})$ in parabolic case. 
\end{proof}

\section{A finite-length theorem for an analytic second-order gradient equation} \label{sec:finite-length-analytic-gradient-equation}

In this section, we will first obtain a generalization of the  classical \L  ojasiewicz gradient inequality in \cite{Lj82}. Then the generalized result, Lemma \ref{lem;L-inq}, will be used to prove a finite length theorem for a particular second order gradient equation \eqref{eq;sec-order-equation} that fits our study of collision and parabolic solutions in the $n$-body problem. 
 
Let $M$ be a real analytic Riemannian manifold and $V$ be a real analytic function on $M$. In this section, $\rnorm{\cdot}$ denotes the norm induced by the Riemannian metric on $M$ and $d(x,y)$ denotes the distance between two points $x,y\in M$.   
\begin{lemma}\label{lem;L-inq}
    Assume $\mathcal C$ is a compact subset of critical points on which $V=V_0$, which is a constant. There exists a neighborhood $N$ of $\mathcal C$, a constant $C>0$, and an exponent $\theta\in(0,1/2]$ such that
\begin{equation}\label{eq;L-ineq}
 |V(y)-V_0|^{1-\theta}\le C\rnorm{\nabla V(y)}\quad y\in N.
\end{equation}
\end{lemma}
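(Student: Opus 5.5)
The plan is to reduce the statement to the classical pointwise \L ojasiewicz gradient inequality from \cite{Lj82} and then use compactness of $\mathcal C$. The classical result says: if $V$ is real analytic near a point $x_0$ with $\nabla V(x_0)=0$, then there exist a neighborhood $U_{x_0}$ of $x_0$, a constant $C_{x_0}>0$ and an exponent $\theta_{x_0}\in(0,1/2]$ such that
\begin{equation*}
|V(y)-V(x_0)|^{1-\theta_{x_0}}\le C_{x_0}\rnorm{\nabla V(y)},\qquad y\in U_{x_0}.
\end{equation*}
This is stated in $\rr^N$ with the Euclidean norm, so the first step is to transfer it to $M$. I would work in an analytic chart around $x_0$. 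In such a chart $V$ stays analytic, and the Riemannian gradient norm equals $\sqrt{g^{ij}\partial_iV\partial_jV}$. On a relatively compact chart neighborhood this is comparable to the Euclidean norm of $dV$, since the matrix $g^{ij}$ is uniformly positive definite there. So the inequality holds on $M$ after enlarging $C_{x_0}$.

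Next I apply this at each $x_0\in\mathcal C$. Here $V(x_0)=V_0$, so each $x_0$ gets a neighborhood $U_{x_0}$ with constants $C_{x_0}$ and $\theta_{x_0}$. By compactness, $\mathcal C$ is covered by finitely many of these, say $U_{x_1},\dots,U_{x_m}$. I set $N=\bigcup_{\ell} U_{x_\ell}$ and $\theta=\min_\ell\theta_{x_\ell}\in(0,1/2]$.

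The only real point is passing from the varying exponents $\theta_{x_\ell}$ to the single exponent $\theta$. Since $\theta\le\theta_{x_\ell}$, we have $1-\theta\ge 1-\theta_{x_\ell}$. So for $|V(y)-V_0|\le 1$,
\begin{equation*}
|V(y)-V_0|^{1-\theta}\le |V(y)-V_0|^{1-\theta_{x_\ell}}.
\end{equation*}
To guarantee $|V(y)-V_0|\le 1$, I first shrink each $U_{x_\ell}$ so that $|V-V_0|\le 1$ on it. This is possible by continuity of $V$ and $V(x_\ell)=V_0$. Then for $y\in U_{x_\ell}$,
\begin{equation*}
|V(y)-V_0|^{1-\theta}\le |V(y)-V_0|^{1-\theta_{x_\ell}}\le C_{x_\ell}\rnorm{\nabla V(y)},
\end{equation*}
and taking $C=\max_\ell C_{x_\ell}$ gives \eqref{eq;L-ineq} on all of $N$.

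I expect no serious obstacle. The one subtle point is the exponent uniformization, which the shrinking argument above handles. The hypothesis that $V$ is constant on $\mathcal C$ matters because it makes every local inequality refer to the same level $V_0$. Without it the local inequalities would refer to different critical values and could not be combined.
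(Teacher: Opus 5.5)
Your proposal is correct and follows the paper's proof: apply the classical \L ojasiewicz inequality at each point of $\mathcal C$, shrink the neighborhoods so that $|V-V_0|\le 1$ on them, extract a finite subcover by compactness, and take $\theta$ as the minimum exponent and $C$ as the maximum constant. The only difference is that you spell out two points the paper leaves implicit: the chart-based transfer of the Euclidean inequality to the Riemannian gradient, and why the smaller common exponent still works once $|V-V_0|\le 1$.
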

\begin{proof}
    The classical \L ojasiewicz gradient inequality states that for a critical point $x\in M$, there exists a neighborhood $N_x$, a constant $C_x$ and an exponent $\theta_x$ satisfying the inequality 
    $$
    |V(y)-V(x)|^{1-\theta_x}\le C_x\rnorm{\nabla V(y)}\quad y\in N_x.
    $$
    We also assume that $|V(y)-V(x)|\le 1$ for $y\in N_x$. Since $\mathcal{C}$ is a compact subset and $V=V_0$ on $\mathcal{C}$, we can choose a finite cover of $\mathcal{C}$ by the neighborhoods $\{N_{x_i}\}$. Additionally, we have the corresponding constants $\{C_{x_i}\}$ and $\{\theta_{x_i}\}$. Let $C=\max\{C_{x_i}\}$, $\theta=\min\{\theta_{x_i}\}$ and $N=\cup N_{x_i}$, then we obtain this Lemma.
\end{proof}

\begin{theorem}\label{thm;finite-length-analytic-gradient-equation}
Let $M$, $V$ and $\mathcal{C}$ be as above. Let $y:[\tau_0,\infty)\to M$ solve the equations
\begin{equation}\label{eq;sec-order-equation}
 u=y', \qquad D_\tau u=\nabla V(y)-a(\tau)u+e(\tau),
\end{equation}
where $D_\tau$ denotes covariant differentiation and $a\in C^0(\rr,\rr)$, $e\in C^0(\rr,TM)$. Suppose
\begin{enumerate}[label=(\roman*)]
\item[$(i)$.] $\dist(y(\tau),\mathcal C):=\inf_{y\in \mathcal{C}} d(y(\tau),y)\to 0,\ \tau\to \infty$;
\item[$(ii)$.] $\rnorm{\nabla V(y(\tau))}+\rnorm{u(\tau)}\le1$ and $\rnorm{u(\tau)}\to0,\ \tau\to \infty$;
\item[$(iii)$.] There exist constants $0<a_0\le a_1$ such that $a_0\le a(\tau)\le a_1$ when $\tau\ge\tau_0$ or $a_0\le -a(\tau)\le a_1$ when $\tau\ge \tau_0$;
\item[$(iv)$.] $e(\tau)\in T_{y(\tau)}M$ and $\rnorm{e(\tau)}\le \min\{1,c e^{-\kappa\tau}\},\ \tau\ge\tau_0$ for some $c>0,\ \kappa>0$.
\end{enumerate}
Then
\begin{equation}\label{abstractlength}
 \int_{\tau_0}^{\infty}\bigl(\rnorm{u}+\rnorm{\nabla V(y)}\bigr)\,d\tau<\infty
\end{equation}
and therefore $y$ converges to a point of $\mathcal C$.
\end{theorem}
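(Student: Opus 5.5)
The plan is a Lyapunov-function argument of Łojasiewicz type. I build a modified energy that is monotone up to an integrable exponentially small error. Then I apply Lemma~\ref{lem;L-inq} to a power of it, so that its derivative controls $\rnorm{u}+\rnorm{\nabla V(y)}$. Write $\sigma=\operatorname{sign}(a)\in\{\pm1\}$, which is constant for $\tau\ge\tau_0$ by (iii).

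\textbf{Step 1: the energy identity.} Set $H=\tfrac12\rnorm{u}^2-(V(y)-V_0)$. Along \eqref{eq;sec-order-equation},
\[
H'=-a\rnorm{u}^2+\langle u,e\rangle .
\]
This alone does not control $\nabla V$, so I add a small cross term.

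\textbf{Step 2: the modified energy.} Consider $W=\sigma H-\varepsilon\langle\nabla V(y),u\rangle$. Its cross-term derivative is
\[
\tfrac{d}{d\tau}\langle\nabla V,u\rangle=\Hess V(u,u)+\rnorm{\nabla V}^2-a\langle\nabla V,u\rangle+\langle\nabla V,e\rangle .
\]
By (i) and compactness of $\mathcal C$, $\Hess V$ is bounded along $y$ for large $\tau$. Then (ii)--(iv) give
\[
W'\le -a_0\rnorm{u}^2-\varepsilon\rnorm{\nabla V}^2+C\varepsilon\rnorm{u}^2+\varepsilon a_1\rnorm{u}\rnorm{\nabla V}+C\rnorm{e}.
\]
I choose $\varepsilon$ small and absorb the cross term by Young's inequality. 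This yields
\[
W'\le -c\bigl(\rnorm{u}^2+\rnorm{\nabla V}^2\bigr)+C\rnorm{e}.
\]
Now set $\widetilde W=W+C\int_\tau^\infty\rnorm{e}$, so that $\widetilde W'\le -c(\rnorm{u}^2+\rnorm{\nabla V}^2)$. The tail term is $O(e^{-\kappa\tau})$ by (iv).

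Next I show $\widetilde W\to0$. We have $u\to0$ by (ii), and $V(y)\to V_0$ and $\nabla V(y)\to0$ by (i) together with uniform continuity near the compact set $\mathcal C$. Since $\widetilde W$ is nonincreasing with limit $0$, it satisfies $\widetilde W\ge0$. If $\widetilde W(\tau_1)=0$ for some $\tau_1$, then $\widetilde W\equiv0$ afterwards. This forces $u=\nabla V=0$ there, and the integral in \eqref{abstractlength} is trivially finite. So I may assume $\widetilde W>0$. For $\sigma=-1$ the sign flip in $W$ is exactly what makes the same monotonicity hold. This is why I avoid time reversal, which is unavailable on an infinite interval.

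\textbf{Step 3: the Łojasiewicz step.} By (i), $y(\tau)\in N$ for large $\tau$, where $N$ is from Lemma~\ref{lem;L-inq}. Bound each piece of $\widetilde W^{1-\theta}$ separately:
\begin{itemize}
\item $|V-V_0|^{1-\theta}\le C\rnorm{\nabla V}$ by Lemma~\ref{lem;L-inq}.
\item $\rnorm{u}^{2(1-\theta)}\le\rnorm{u}$, since $\rnorm{u}\le1$ and $\theta\le1/2$.
\item $(\rnorm{\nabla V}\rnorm{u})^{1-\theta}\le\rnorm{\nabla V}+\rnorm{u}$, since both factors are at most $1$ and $1-\theta\ge1/2$.
\item The tail contributes at most $\delta(\tau):=Ce^{-\kappa(1-\theta)\tau}$.
\end{itemize}
Hence $\widetilde W^{1-\theta}\le C(\rnorm{u}+\rnorm{\nabla V}+\delta)$, and therefore
\[
-(\widetilde W^{\theta})'\ge c\,\frac{\rnorm{u}^2+\rnorm{\nabla V}^2}{\rnorm{u}+\rnorm{\nabla V}+\delta(\tau)} .
\]
At times where $\rnorm{u}+\rnorm{\nabla V}\ge\delta$, the right side is at least $c'(\rnorm{u}+\rnorm{\nabla V})$. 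At the remaining times, $\rnorm{u}+\rnorm{\nabla V}<\delta(\tau)$, which is integrable. Integrating the first set of times against the bounded quantity $\widetilde W^\theta(\tau_0)$ gives \eqref{abstractlength}.

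\textbf{Step 4: convergence.} Finally, $d(y(\tau),y(\tau'))\le\int_\tau^{\tau'}\rnorm{u}$, so $y(\tau)$ is Cauchy. Its limit lies in the closed set $\mathcal C$ by (i).

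I expect the main obstacle to be Step~3. There I must check that the perturbation $e$ and the cross term do not spoil the Łojasiewicz exponent. This works only because of $\theta\le1/2$, the normalisation in (ii), and the exponential decay in (iv).
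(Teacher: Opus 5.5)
Your proof is correct. Its architecture matches the paper's:
\begin{itemize}
\item the same energy $H$ and the same cross term $-\varepsilon\langle\nabla V,u\rangle$;
\item the same sign flip $E_0=-H-\varepsilon\langle g,u\rangle$ when $a<0$;
\item the same \L ojasiewicz bound of the $(1-\theta)$-power of the energy by $\rnorm{u}+\rnorm{\nabla V}$ plus an error term.
\end{itemize}

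Where you genuinely differ is in how the forcing $e$ is absorbed.

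The paper applies Young's inequality to $\langle e,u\rangle$ and $\varepsilon\langle g,e\rangle$, so the error is quadratic: $c_1\rnorm{e}^2\le c_1c^2e^{-2\kappa\tau}$. It then adds $Be^{-2\kappa\tau}$ with $2\kappa B>c_1+1$. An exponential is comparable to its own derivative, so $b^2=e^{-2\kappa\tau}$ appears on both sides. In $E$ it contributes $b^{2p}\le b$ to $E^p$. In the dissipation it gives $-E'\ge c_2(\rnorm{u}^2+\rnorm{g}^2+b^2)$. This makes $E>0$ automatic and yields directly $-(E^\theta)'\ge c_3(\rnorm{u}+\rnorm{g}+b)$, with no case split.

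You instead keep the error linear, using $\rnorm{u},\rnorm{g}\le 1$, and compensate with the tail $C\int_\tau^\infty\rnorm{e}$. That tail enters $\widetilde W$ but not the dissipation. This is why you need two extra steps: splitting times according to whether $\rnorm{u}+\rnorm{\nabla V}\ge\delta(\tau)$, and treating the case $\widetilde W=0$ separately.

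What each route buys:
\begin{itemize}
\item Your argument uses only the integrability of $\bigl(\int_\tau^\infty\rnorm{e}\bigr)^{1-\theta}$, not the self-similarity of the exponential correction. It therefore adapts directly to slower, for example polynomial, decay of $e$.
\item The paper's version is shorter and gives the cleaner tail estimate $\int_\tau^\infty(\rnorm{u}+\rnorm{g}+b)\le C\,E(\tau)^\theta$.
\end{itemize}
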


\begin{proof}
Choose a neighborhood $N$ of $\mathcal{C}$ as described in the above Lemma. Without loss of generality, we assume that $N$ is a precompact neighborhood and $y(\tau)\in N$ for $\tau\ge \tau_0$ by assumption $(i)$. Set $g(\tau)=\nabla V(y(\tau))$, and choose a constant $L$ bounding $\rnorm{\Hess V}$ in $N$. By the assumptions, we have $g\to0$, $V(y)\to V_0$, and $u\to0$ as $\tau\to\infty$. Let
$$
 H=\tfrac12\rnorm{u}^2-(V(y)-V_0),\qquad
 E_0=H-\varepsilon\langle g,u\rangle,
$$
where $\varepsilon>0$ will be fixed sufficiently small. \eqref{eq;sec-order-equation} yields the identities
\begin{align*}
 H'&=-a\rnorm{u}^2+\langle e,u\rangle,\\
 (\langle g,u\rangle)'&=\Hess V[u,u]+\rnorm{g}^2-a\langle g,u\rangle+\langle g,e\rangle.
\end{align*}
Consequently
\begin{align}\label{Ederiv}
 E_0'=&-a\rnorm{u}^2-\varepsilon\rnorm{g}^2
 -\varepsilon\Hess V[u,u]+\varepsilon a\langle g,u\rangle\\
 &\hspace{9mm}+\langle e,u\rangle-\varepsilon\langle g,e\rangle.\nonumber
\end{align}
Here we assume that $a(\tau)$ is positive. If $a(\tau)$ is negative, we just let $E_0=-H-\varepsilon\langle g,u\rangle$. Apply Young's inequality in the forms
\begin{align*}
\varepsilon a_1\rnorm{g}\rnorm{u}&\le\tfrac\varepsilon4\rnorm{g}^2+\varepsilon a_1^2\rnorm{U}^2,\\
 \rnorm{e}\rnorm{u}&\le\tfrac{a_0}4\rnorm{u}^2+a_0^{-1}\rnorm{e}^2,\\
 \varepsilon\rnorm{g}\rnorm{e}&\le\tfrac\varepsilon4\rnorm{g}^2+\varepsilon\rnorm{e}^2.
\end{align*}
Choose $\varepsilon(L+a_1^2)\le a_0/4$. It follows that there are $c_0,c_1>0$ for which
\begin{equation}\label{eq;inq-H}
 E_0'\le- c_0(\rnorm{u}^2+\rnorm{g}^2)+c_1 e^{-2\kappa\tau}.
\end{equation}
Choose $B>(c_1+1)/(2\kappa)$ and define the corrected energy
\[
 E=E_0+B e^{-2\kappa\tau}.
\]
Then, with $b(\tau)=e^{-\kappa\tau}$ and a new constant $c_2>0$,
\begin{equation}\label{eq;Ecorrect}
 -E'\ge c_2(\rnorm{u}^2+\rnorm{g}^2+b^2).
\end{equation}
Also $E\to0$ as $\tau\to\infty$. Since $E$ decreases to zero, $E\ge0$. In fact, $E>0$ at every finite time because $b>0$. Thus, we can take its positive fractional powers.

Let $p=1-\theta\in[1/2,1)$, where $\theta$ is from Lemma~\ref{lem;L-inq}. Then \L  ojasiewicz inequality implies
\begin{align}\label{eq;Ep-bound}
 E^p&\le C_0\left(|V-V_0|^p+\rnorm{u}^{2p}+\rnorm{g}^p\rnorm{u}^p+b^{2p}\right)\\
 &\le C_1\left(\rnorm{g}+\rnorm{u}+b\right).\nonumber
\end{align}
Here $2p\ge1$ gives $\rnorm{u}^{2p}\le\rnorm{u}$ and $b^{2p}\le b$. The mixed term is obtained by $\rnorm{g}^p\rnorm{u}^p\le(\rnorm{g}+\rnorm{u})^{2p}\le \rnorm{g}+\rnorm{u}.$

Combining \eqref{eq;Ecorrect} and \eqref{eq;Ep-bound}, and using the elementary inequality $A^2+B^2+C^2\ge(A+B+C)^2/3$, we have
\begin{equation}\label{lengthineq}
 -(E^\theta)'=\theta\frac{-E'}{E^p}
 \ge c_3\bigl(\rnorm{u}+\rnorm{g}+b\bigr).
\end{equation}
Integration from $\tau_0$ to infinity proves the more precise estimate
\begin{equation}\label{tailestimate}
 \int_{\tau_0}^\infty(\rnorm{u}+\rnorm{g}+b)\,d\tau\le C E(\tau_0)^\theta.
\end{equation}
Thus $y$ has finite length and therefore has a limit, which belongs to $\mathcal C$ by assumption $(i)$.
\end{proof}


\section{No infinite spin for collision solutions} \label{sec;coll-sol}

The purpose of this this section is to give a proof of Theorem~\ref{thm;coll}. Let $q(t)$ be a $\bk$-collision solution and the $\bk$-subsystem $z(t)$ defined as in Section \ref{sec;co-tran}. Then $z(t)$ satisfies the equations~\eqref{eq;eq-subsystem}. To better describe the asymptotic behavior of the solution, we introduce the McGehee coordinate $(r, v, s, w)$ and time parameter $\tau$ with $(r, s)$ defined as in \eqref{eq;r-s} and the rest as blow   
\begin{equation}\label{eq;McG-col}
 \frac{d\tau}{dt}=r^{-3/2},\qquad
 v=r^{1/2}\dot r,\qquad w=r^{3/2}\dot s.
\end{equation}
Use $'$ to represent derivatives with respect to $\tau$ and denote $\gamma(\tau)=(r, v, s, w)(\tau)$ as the orbit of $(z,\dot{z})(t)$ in new coordinates and time variable, then we have    
\begin{proposition}\label{prop;MaC-col-eq}
$\gamma(\tau)$ satisfies the following time-dependent equations

\begin{equation}  \label{blowup-coll}
\begin{aligned}
 r'&=rv,\\
 v'&=\tfrac12v^2+\norm w^2-U(s)+r^2\ip{s}{f(\tau)}, \\
 s'&=w, \\
 w'&=\nabla_{S_{\bk}} U_{\bk}(s)-\tfrac12vw+r^2\bigl(f(\tau)-\ip{s}{f(\tau)}s\bigr)-\norm w^2s, 
\end{aligned} 
\end{equation}
where $\nabla_{S_{\bk}}$ denotes the gradient on sphere $S_{\bk}$. 

Moreover the following identities hold
\begin{equation}
     D_{\tau}w=\nabla_{S_{\bk}}U_{\bk}(s)-\tfrac12vw+e(\tau) \; \text{ where } \; e=r^2\bigl(f-\ip{s}{f}s\bigr);
\end{equation}
\begin{equation}\label{eq;scaled-energy-coll}
 rh_{\bk}=\tfrac12(v^2+\norm w^2)-U_{\bk}(s);
\end{equation}
\begin{equation}\label{eq;v-mono-coll}
 v'=\tfrac12\norm w^2+rh_{\bk}+r^2\ip{s}{f}.
\end{equation}
\end{proposition}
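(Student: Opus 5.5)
The plan is a direct computation in the original variables. I will differentiate $r$ and $s$ using \eqref{eq;eq-subsystem}, then change to the McGehee variables \eqref{eq;McG-col}. Throughout I write $U$ for $U_{\bk}$ and use that $U$ is homogeneous of degree $-1$. Since the metric is the mass metric, $\nabla$ means the gradient with respect to $\ip{\cdot}{\cdot}$. Euler's identity then gives $\ip{z}{\nabla U(z)}=-U(z)$, together with $\nabla U(z)=r^{-2}\nabla U(s)$ and $U(z)=r^{-1}U(s)$. Splitting $\nabla U(s)$ into radial and tangential parts gives $\nabla U(s)=\nabla_{S_{\bk}}U(s)-U(s)s$.

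\emph{Radial equations.} From $z=rs$ and $\ip{s}{s}=1$ we get $\ip{s}{\dot s}=0$, so $\dot z=\dot r s+r\dot s$. Differentiating twice,
\[
\ddot z=\ddot r s+2\dot r\dot s+r\ddot s .
\]
Pairing with $s$ and using $\ip{s}{\ddot s}=-\norm{\dot s}^2$ gives
\[
\ddot r=r\norm{\dot s}^2+\ip{s}{\ddot z}=r\norm{\dot s}^2-r^{-2}U(s)-\ip{s}{f}.
\]
Now $d/d\tau=r^{3/2}d/dt$. This gives $r'=r^{3/2}\dot r=rv$, and
\[
v'=r^{3/2}\bigl(\tfrac12 r^{-1/2}\dot r^2+r^{1/2}\ddot r\bigr)=\tfrac12 v^2+r^3\norm{\dot s}^2-U(s)-r^2\ip{s}{f},
\]
with $r^3\norm{\dot s}^2=\norm{w}^2$. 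The forcing enters with sign $-f$. Its sign in \eqref{blowup-coll} depends on the convention for $f$ (i.e.\ writing $\ddot z=\nabla U+f$); I will match the paper's sign convention, which does not affect any later estimate. The forcing terms scale as $r^2 f$ because $r^{3/2}\cdot r^{1/2}=r^2$.

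\emph{Angular equations.} Clearly $s'=r^{3/2}\dot s=w$. The tangential part of the $\ddot z$ identity gives
\[
r\ddot s=\ddot z-\ip{s}{\ddot z}s-2\dot r\dot s+r\norm{\dot s}^2 s .
\]
Then I compute
\[
w'=r^{3/2}\bigl(\tfrac32 r^{1/2}\dot r\dot s+r^{3/2}\ddot s\bigr).
\]
Substituting the expression for $r\ddot s$ turns the $\dot r\dot s$ terms into $(\tfrac32-2)vw=-\tfrac12 vw$. The $\norm{\dot s}^2 s$ term becomes $r^3\norm{\dot s}^2 s=\norm{w}^2 s$. The tangential projection of $r^2\ddot z$ gives $\nabla_{S_{\bk}}U(s)$ plus $r^2$ times the tangential part of the forcing. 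As with $v'$, the sign of $\norm{w}^2 s$ in the stated system is convention-sensitive, so I will check it carefully; my computation gives $+\norm{w}^2 s$ in $w'$ as written in $\mathbb R^{dk}$.

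\emph{Covariant form, energy and monotonicity.} For the covariant identity, note that on the sphere $S_{\bk}$ with the induced metric, $D_\tau w$ is the tangential projection of $w'$. The normal term $\pm\norm{w}^2 s$ drops out, and $e$ is the tangential part of $r^2 f$. For the energy, $K=\tfrac12(\dot r^2+r^2\norm{\dot s}^2)=\tfrac12 r^{-1}(v^2+\norm{w}^2)$ and $U(z)=r^{-1}U(s)$, so multiplying $h_{\bk}=K-U$ by $r$ gives \eqref{eq;scaled-energy-coll}. Finally, substituting $U(s)=\tfrac12(v^2+\norm w^2)-rh_{\bk}$ into the $v'$ equation gives \eqref{eq;v-mono-coll}.

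The main obstacle is not conceptual but bookkeeping. The delicate points are the tangential/normal decomposition of $\ddot s$, the factor $\tfrac32-2=-\tfrac12$, and keeping the signs of the forcing and of the $\norm{w}^2 s$ term consistent with \eqref{eq;eq-subsystem}.
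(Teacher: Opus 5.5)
Your route is the same as the paper's: pair $\ddot z$ with $s$ to get $\ddot r$, differentiate $z=rs$ twice and project onto $T_sS_{\bk}$, then rescale with $d\tau/dt=r^{-3/2}$. The radial equations, the covariant identity, \eqref{eq;scaled-energy-coll} and \eqref{eq;v-mono-coll} all come out correctly. However, your angular step has a sign error, and your conclusion that $w'$ contains $+\norm{w}^2 s$ is false. From $\ddot z=\ddot r s+2\dot r\dot s+r\ddot s$ and $\ddot r=r\norm{\dot s}^2+\ip{s}{\ddot z}$ one gets
\[
r\ddot s=\ddot z-\ip{s}{\ddot z}s-2\dot r\dot s-r\norm{\dot s}^2 s ,
\]
not $+r\norm{\dot s}^2 s$. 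Indeed, pairing your formula with $s$ gives $\ip{s}{\ddot s}=+\norm{\dot s}^2$, which contradicts the identity $\ip{s}{\ddot s}=-\norm{\dot s}^2$ you used in the radial step. With the correct sign,
\[
w'=\tfrac32 vw-2vw+r^2\bigl(\ddot z-\ip{s}{\ddot z}s\bigr)-\norm{w}^2 s .
\]
This is exactly the fourth equation of \eqref{blowup-coll}, and it agrees with the paper's formula for $\ddot s$, which contains $-\norm{\dot s}^2 s$.

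This sign is not convention-sensitive, so you cannot choose it to agree with the statement. Differentiating $\ip{s}{w}=0$ with $s'=w$ forces $\ip{s}{w'}=-\norm{w}^2$. Every other term in $w'$, namely $\nabla_{S_{\bk}}U_{\bk}(s)$, $vw$ and $r^2(f-\ip{s}{f}s)$, is tangent to $S_{\bk}$. So the normal part of $w'$ must be $-\norm{w}^2 s$, whatever sign $f$ carries. As written, your proposal derives a false version of the $w'$ equation. The slip does not affect $D_\tau w$, since the normal term is projected away, which is why the rest of your argument survives.

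On the forcing sign, you are right that \eqref{eq;eq-subsystem} as printed would give $-r^2\ip{s}{f}$. The $+$ sign is not a convention either. $F_t$ in \eqref{eq;perturbation-term} is the external acceleration, so the definition of $f$ gives $\ddot z=\nabla U_{\bk}(z)+f$, which is what the paper's proof actually uses. The displayed equation \eqref{eq;eq-subsystem} contains a misprint, and the signs in the statement are the correct ones.
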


 \begin{proof}
Differentiating $r^2=\ip{z}{z}$ twice gives 
$$
\dot{r}^2+r\ddot{r}=\ip{\dot{r}s+r\dot{s}}{\dot{r}s+r\dot{s}}+\ip{rs}{\ddot{z}}.
$$
Combining $\ip{s}{\dot s}=0$ and \eqref{eq;eq-subsystem}, we have
$$
\ddot r=r\norm{\dot s}^2-r^{-2}U_{\bk}(s)+\ip{s}{f}.
$$
Here homogeneity yields $\nabla U_{\bk}(rs)=r^{-2}\nabla U_{\bk}(s)$ and $\ip{s}{\nabla U_{\bk}(s)}=-U_{\bk}(s)$. Then 
$$v'=r^{3/2}(\frac{1}{2}r^{-1/2}\dot{r}+r^{1/2}\ddot{r})=\tfrac12v^2+\norm w^2-U(s)+r^2\ip{s}{f(\tau)}.$$
Again differentiating $z=rs$ twice implies
$$
\ddot{r}s+2\dot{r}\dot{s}+r\ddot{s}=\nabla U_{\bk}+f,
$$
and 
$$
\ddot{s}=r^{-3}\nabla U_{\bk}(s)+r^{-3}U_{\bk}(s)s+r^{-1}f-2r^{-3}vw-r^{-1}\ip{s}{f}s-\norm{\dot{s}}^2s.
$$
The tangential projection gives
$$
D_t\dot s=\ddot{s}-\ip{\ddot{s}}{s}s=r^{-3}\nabla_{S_{\bk}}U_{\bk}(s)+r^{-1}(f-\ip{s}{f}s)-2r^{-3}vw.
$$
Since $D_{t}w=r^{3/2}D_t \dot{s}+D_t(r^{3/2})\dot{s}$,
$$
 D_\tau w=\tfrac32vw+r^3D_t\dot s
 =-\tfrac12vw+\nabla_{S_{\bk}}U_{\bk}(s)+r^2\bigl(f-\ip{s}{f}s\bigr).
$$
Differentiating $v=r^{1/2}\dot r$ proves the radial equation. Finally
$$
 \dot z=r^{-1/2}(vs+w),\qquad \ip{s}{w}=0,
$$
which proves \eqref{eq;scaled-energy-coll} and \eqref{eq;v-mono-coll}.
\end{proof}

From Sundman's estimates(see \cite{Sp70} or \cite[Proposition 6.25]{FT04}):
\begin{align}\label{eq;asym-col}
    I_{\mathbf{k}}(t) = I_{\bk}(q_{\bk}^c(t))\sim |T-t|^\frac{4}{3}, \quad K_{\mathbf{k}}(t)\sim |T-t|^{-\frac{2}{3}}, \; \text{ as } t \to T, 
\end{align}
we can obtain an estimate for the energy of subsystem.
\begin{lemma}\label{lem;est-energy-col}
    $h_{\bk}(t)$ is bounded.
\end{lemma}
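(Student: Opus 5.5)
The plan is to show that $h_{\bk}$ has bounded derivative on $[t_0,T)$. Since $T$ is finite, integrating then gives boundedness. Write $h_{\bk}=K_{\bk}-U_{\bk}$ with $K_{\bk}=\tfrac12\norm{\dot z}^2$. The first step is to differentiate along the subsystem equation \eqref{eq;eq-subsystem}:
$$\dot h_{\bk}=\ip{\dot z}{\ddot z}-\ip{\nabla U_{\bk}(z)}{\dot z}=-\ip{\dot z}{f(t)}.$$
Here the mass metric is used consistently: in the paper's normalization $\ddot z=\nabla U_{\bk}-f$ is understood with $\nabla$ the mass-metric gradient, so that $m_i\ddot z_i=\nabla_iU_{\bk}-m_if_i$. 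The gravitational part cancels exactly, and only the external forcing remains. Hence
$$|\dot h_{\bk}|\le \norm{\dot z}\,\norm{f(t)}.$$

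The second step is to bound each factor near $T$. Lemma \ref{lem;est-ft} gives $\norm{f(t)}\le C r(t)$ for $t_0\le t<T$. By definition $r^2=I_{\bk}$, so Sundman's estimate \eqref{eq;asym-col} gives $r\sim |T-t|^{2/3}$. For the velocity, $\norm{\dot z}^2=2K_{\bk}$, and \eqref{eq;asym-col} gives $\norm{\dot z}\sim |T-t|^{-1/3}$. Combining these,
$$|\dot h_{\bk}(t)|\le C|T-t|^{-1/3}|T-t|^{2/3}=C|T-t|^{1/3},$$
which is bounded and in particular integrable on $[t_0,T)$.

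The last step is to integrate: $|h_{\bk}(t)-h_{\bk}(t_0)|\le \int_{t_0}^T C|T-s|^{1/3}\,ds<\infty$. On the compact interval $(0,t_0]$, away from $T$, the solution is collision-free and smooth, so $h_{\bk}$ is bounded there too. It also follows that $h_{\bk}(t)$ has a finite limit as $t\to T$.

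The only delicate point is the energy identity. One has to check that differentiating $K_{\bk}$ uses the relative velocities $\dot q_i-\dot c_{\bk}=\dot z_i$, so that the definition \eqref{eq;sub-energy} matches $\tfrac12\norm{\dot z}^2$, and that no hidden term arises from $\dot c_{\bk}$. This holds because $\sum_{i\in\bk}m_i\dot z_i=0$, so any term of the form $\ip{\dot z}{\text{const vector}}$ vanishes; this is exactly why $f$ is defined with the center-of-mass correction. The remaining estimates come directly from Lemma \ref{lem;est-ft} and \eqref{eq;asym-col}, so I expect no further obstacle.
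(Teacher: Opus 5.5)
Your proposal is correct and follows the paper's proof essentially verbatim: differentiate to get $\dot h_{\bk}=-\ip{f}{\dot z}$, bound this by $\norm{\dot z}\,\norm{f}\le C|T-t|^{-1/3}\cdot|T-t|^{2/3}$ using Sundman's estimates \eqref{eq;asym-col} and Lemma \ref{lem;est-ft}, and integrate up to the finite time $T$. One small slip: $(0,t_0]$ is not compact, and a $\bk$-collision solution need not extend to $t=0$; this does not affect the argument, since the lemma is only needed (and only proved in the paper) for $t$ near $T$.
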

\begin{proof}
    Recall that $h_\mathbf{k}=\frac{1}{2}\norm{z}^2 -U_{\bk}$. By differentiating both sides, we get   
$$ \dot{h}_{\bk} =\ip{\ddot{z}}{\dot{z}}-\ip{\nabla U_{\bk}}{\dot{z}} =-\ip{f}{\dot{z}}.$$
By the estimates in \eqref{eq;asym-col} and Lemma~\ref{lem;est-ft}, 
$$
\dot{h}_{\bk}=O(|T-t|^{1/3})
$$
which implies $h_{\bk}=O(1)$.
\end{proof}
The scaling $d\tau/dt=r^{-3/2}$ implies $\tau(t)\sim log(t-T)$, which means $\gamma(\tau)$ exists for $\tau\in[\tau_0,\infty)$. It also shows that $T-t(\tau)=O(e^{-\kappa\tau})$ for some constant $\kappa>0$ and 
\begin{align}\label{eq;est-r2sf}
    r^2\ip{s}{f}=O(r^3)=O(e^{-2\kappa\tau}).
\end{align}
Here we use $\norm{f(t)}\le Cr(t)$ form Lemma~\ref{lem;est-ft}.

Now we can obtain the asymptotic behavior of $\gamma(\tau)$ as $\tau\to\infty$.
\begin{proposition}\label{prop;asym-col}
Suppose $q(t)$ is a $\bk$-collision solution for $t\in[0,T)$ and let $\gamma(\tau)=(r, v, s, w)(\tau)$ be the corresponding orbit of the $\bk$-subsystem in MaGehee coordinates. Then when $\tau$ tends to $\infty$, there exists a negative constant $v_0$ such that
$$
 v\to v_0,\quad w\to0,\quad
 U_{\bk}(s)\to U_0=\tfrac12v_0^2,\quad
 \nabla_{S_{\bk}} U_{\bk}(s)\to0.
$$
In particular the limiting set of $s(\tau)$, denoted by $\Omega$, is a nonempty compact subset of critical points of $U_{\bk}$ in $\hat{S}_{\bk}$ with $U_{\bk}|_\Omega=U_0$.
\end{proposition}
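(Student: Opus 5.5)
We derive everything from the McGehee system \eqref{blowup-coll} together with the Sundman asymptotics \eqref{eq;asym-col}, Lemma~\ref{lem;est-energy-col} and the decay \eqref{eq;est-r2sf} of the perturbation. The first step is to establish boundedness and the sign of $v$. By \eqref{eq;asym-col}, $r\sim |T-t|^{2/3}$ and $K_{\bk}\sim |T-t|^{-2/3}$. Since $\norm{\dot z}^2 = r^{-1}(v^2+\norm w^2)$, the quantity $v^2+\norm w^2 = 2rK_{\bk}$ is bounded above and below by positive constants. As $h_{\bk}$ is bounded and $r\to0$, \eqref{eq;scaled-energy-coll} then shows that $U_{\bk}(s)$ is bounded. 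Because $U_{\bk}$ blows up near $\Delta_{\bk}\cap S_{\bk}$, this means $s(\tau)$ stays in a compact subset of $S_{\bk}\setminus\Delta_{\bk}$. The time change gives $d\tau/dt = r^{-3/2}\sim |T-t|^{-1}$, so $\tau\to\infty$ as $t\to T$ and $r(\tau)=O(e^{-\kappa\tau})$.

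Next we show that $v$ converges. By \eqref{eq;v-mono-coll},
\[
 v' = \tfrac12\norm w^2 + rh_{\bk} + r^2\ip{s}{f},
\]
and the last two terms are $O(e^{-\kappa\tau})$, which is integrable. Hence $v(\tau) - \int_{\tau_0}^{\tau}(rh_{\bk}+r^2\ip{s}{f})\,d\tau$ is nondecreasing. The quantity $v$ is bounded, so $v\to v_0$ and, as a byproduct,
\[
 \int_{\tau_0}^\infty \norm w^2\,d\tau<\infty .
\]
Since $r'=rv$ and $r\to0$ exponentially, we must have $v_0\le 0$.

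The main obstacle is upgrading $\int\norm w^2<\infty$ to $w\to0$. Every term in the $w'$ and $v'$ equations is bounded, because $s$ lies in a compact set away from collisions, $v,w$ are bounded, and the perturbations are $O(e^{-2\kappa\tau})$. Hence $(\norm w^2)'$ is bounded, so $\norm w^2$ is uniformly continuous, and the integrability then forces $w\to0$ (Barbalat's lemma).

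Once $w\to0$, we also get $v'\to0$ from \eqref{eq;v-mono-coll}. Combining this with the $v'$ equation in \eqref{blowup-coll} yields $U_{\bk}(s)\to \tfrac12 v_0^2 = U_0$. Since $U_{\bk}\ge \min_{S_{\bk}} U_{\bk}>0$, it follows that $v_0\neq0$, so $v_0<0$. Equivalently, \eqref{eq;scaled-energy-coll} with $rh_{\bk}\to0$ gives the same identity $U_0=\tfrac12 v_0^2$.

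For the gradient, write $D_\tau w = \nabla_{S_{\bk}}U_{\bk}(s)+\epsilon(\tau)$ with $\epsilon(\tau)\to0$. The covariant derivative $D_\tau(\nabla_{S_{\bk}}U_{\bk}(s)) = \Hess U_{\bk}[w]$ is bounded, so $D_\tau w$ is uniformly continuous. Since $w\to0$, a second Barbalat-type argument gives $D_\tau w\to0$. Concretely, if $\rnorm{\nabla_{S_{\bk}}U_{\bk}}\ge\delta$ at some large time, then on a fixed-length interval after it $D_\tau w$ stays of size about $\delta/2$ in a nearly fixed direction. That would change $w$ by a definite amount, contradicting $w\to0$. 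Hence $\nabla_{S_{\bk}}U_{\bk}(s)\to0$.

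Finally, $s(\tau)$ lies in a compact set, so $\Omega$ is nonempty and compact. By continuity every point of $\Omega$ satisfies $\nabla_{S_{\bk}}U_{\bk}=0$ and $U_{\bk}=U_0$, i.e.\ it is a normalized CC in $\hat{\mathscr C}_{\bk}$.
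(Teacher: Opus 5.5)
Your proof is correct and follows the paper's skeleton. Sundman's estimates give $U_{\bk}(s)=O(1)$, hence $s$ stays in a compact set away from $\Delta_{\bk}$. The nearly monotone quantity $v-\int(rh_{\bk}+r^2\ip{s}{f})\,d\tau$ gives $v\to v_0$ and $\int\norm{w}^2\,d\tau<\infty$. The energy relation then gives $U_{\bk}(s)\to\frac12 v_0^2$.

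You depart from the paper in the two limits $w\to0$ and $\nabla_{S_{\bk}}U_{\bk}(s)\to0$.

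\begin{itemize}
\item \emph{Limit $w\to0$.} The paper simply asserts that $\int\norm{w}^2<\infty$ implies $w\to0$. Your bound on $(\norm{w}^2)'$ supplies the uniform continuity that this deduction tacitly needs.
\item \emph{Gradient limit.} The paper argues dynamically: $\gamma(\tau)$ accumulates on a compact subset of the collision manifold $\{r=0\}$, so its $\omega$-limit set is invariant, lies in $\{w=0\}$, and therefore lies in $\{w'=0\}$. You instead use a second Barbalat-type argument. Since $s'=w$ is bounded, $\tau\mapsto\nabla_{S_{\bk}}U_{\bk}(s(\tau))$ is Lipschitz, and together with $w\to0$ this forces the gradient to vanish.
\end{itemize}

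The paper's route is shorter and more conceptual. However, it implicitly uses an asymptotically autonomous argument: the nonautonomous terms $r^2(\cdots)$ vanish on $\{r=0\}$, so limit sets are invariant under the autonomous collision-manifold flow. The paper does not spell this out. Your route is elementary and self-contained.

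Run your ``nearly fixed direction'' step in the ambient space $\be_{\bk}$, where $w'=\nabla_{S_{\bk}}U_{\bk}(s)+o(1)$. This avoids comparing vectors in different tangent spaces of $S_{\bk}$.

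A minor difference concerns the sign of $v_0$. You get $v_0<0$ from $v_0\le 0$ together with $\frac12v_0^2=U_0\ge\min_{S_{\bk}}U_{\bk}>0$. The paper reads it off from $r'=rv$ and the decay of $r$; with $r=O(e^{-\kappa\tau})$ this in fact gives $v_0\le-\kappa$.
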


\begin{proof}
By estimates \eqref{eq;asym-col} and identity $U_{\bk}(z(t))=K_{\bk}(t)-h_{\bk}(t)=O(|T-t|^{-2/3})$, we obtain that
    $$U_{\bk}(s)=rU_{\bk}(z)=O(1).$$
Since the preimage of a bounded set for $U_{\bk}$ is a compact subset of $S_{\bk}$, $s(\tau)$ stays in a compact subset of $S_{\bk}$. 

From Lemma~\ref{lem;est-energy-col}, $rh_{\bk}\to 0$ as $\tau\to \infty$ and since $U_{\bk}$ has a positive lower bound, \eqref{eq;scaled-energy-coll} implies that $|v|$ and $\norm{w}$ are bounded. Combining the equation \eqref{eq;v-mono-coll} and estimate \eqref{eq;est-r2sf}, 
$$
v(\tau)-\int_{\tau_0}^{\tau}rh_{\bk}+r^2\ip{s}{f}\,d\eta
$$
is uniform bounded and nondecreasing. Thus $v(\tau)$ has a limit $v_0$ and $\int_{\tau_0}^{\infty}\norm w^2d\tau<\infty$, which also implies $w\to 0$ when $\tau\to\infty$. The equation of $r$ and $r\to 0$ shows that $v_0$ is negative. Using energy equation \eqref{eq;scaled-energy} again, we have $U_{\bk}(s)\to \frac{1}{2}v_0^2$.

Now it has finally been shown that $\gamma(\tau)$ converges to the set $\{r=0,v=v_0, U_{\bk}(s)=\frac{1}{2}v_0^2,w=0\}$ which is a compact subset of collision manifold $\{r=0\}$. Therefore the limit set $\om(\gamma)$ is a nonempty compact, invariant subset. Since $\omega(\gamma)$ is invariant and contained in $\{w=0\}$, it must be contained in the set $w^\prime=0$, which implies $\nabla_{S_{\bk}} U_{\bk}(s)\to0$.
\end{proof}

\begin{proposition}
\label{thm;no-spin-col}
With the same assumption and notation from the previous proposition, there exists a normalized CC $s^*\in S_{\bk}$ such that
\begin{equation}\label{eq;no-spin-col}
 \int_{t_0}^{T}\norm{\dot s(t)}\,dt<\infty,
 \; \text{ and } s(t)\rightarrow s^*, \text{ as } t \rightarrow T. 
\end{equation} 
\end{proposition}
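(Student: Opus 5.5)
The plan is to apply Theorem~\ref{thm;finite-length-analytic-gradient-equation} on the Riemannian manifold $S_{\bk}\setminus\Delta_{\bk}$ with the metric induced by $\ip{\cdot}{\cdot}$. We take $V=U_{\bk}|_{S_{\bk}}$, which is real analytic away from collisions, and $\mathcal C=\Omega$, the limit set from Proposition~\ref{prop;asym-col}.

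\textbf{Matching the equation.} By Proposition~\ref{prop;MaC-col-eq}, $y(\tau)=s(\tau)$ and $u=w=s'$ satisfy
$$D_\tau w=\nabla_{S_{\bk}}U_{\bk}(s)-a(\tau)w+e(\tau),$$
with $a(\tau)=\tfrac12 v(\tau)$ and $e=r^2(f-\ip{s}{f}s)$. Since $v\to v_0<0$, we have $-a(\tau)\in[a_0,a_1]$ for $\tau\ge\tau_0$, for instance with $a_0=|v_0|/4$ and $a_1=|v_0|$. This gives hypothesis (iii) in its negative-sign version.

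\textbf{Checking the remaining hypotheses.}
\begin{enumerate}
\item[(i)] This is the definition of $\Omega$ as the $\omega$-limit set, using that $s$ stays in a compact subset of $S_{\bk}\setminus\Delta_{\bk}$ (shown in the proof of Proposition~\ref{prop;asym-col}). Proposition~\ref{prop;asym-col} also gives that $\Omega$ is a nonempty compact set of critical points on which $U_{\bk}=U_0$, as Lemma~\ref{lem;L-inq} requires.
\item[(ii)] This follows from $w\to0$ and $\nabla_{S_{\bk}}U_{\bk}(s)\to0$ after enlarging $\tau_0$.
\item[(iv)] Lemma~\ref{lem;est-ft} gives $\norm{e}\le 2r^2\norm{f}\le Cr^3$. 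Then \eqref{eq;est-r2sf}, i.e.\ $r^3=O(e^{-2\kappa\tau})$, gives exponential decay, and we again enlarge $\tau_0$ so that $\norm e\le1$.
\end{enumerate}
To justify $r^3=O(e^{-2\kappa\tau})$ cleanly, use $r'=rv$ with $v\to v_0<0$. This gives $r(\tau)\le Ce^{v_0\tau/2}$ for large $\tau$, which is exponential decay with an explicit rate.

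\textbf{Conclusion.} Theorem~\ref{thm;finite-length-analytic-gradient-equation} then yields $\int_{\tau_0}^\infty\norm{w}\,d\tau<\infty$ and $s(\tau)\to s^*\in\Omega$, and $s^*$ is a normalized CC. Converting back to $t$ uses $w=r^{3/2}\dot s$ and $d\tau=r^{-3/2}dt$, so $\norm{w}\,d\tau=\norm{\dot s}\,dt$. Hence
$$\int_{t_0}^T\norm{\dot s}\,dt=\int_{\tau_0}^\infty\norm w\,d\tau<\infty,$$
and $s(t)\to s^*$ as $t\to T$.

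\textbf{Main obstacle.} The main point to check is hypothesis (iii) together with the sign convention. With $a<0$, the proof of Theorem~\ref{thm;finite-length-analytic-gradient-equation} uses $E_0=-H-\varepsilon\langle g,u\rangle$, and this must still be a Lyapunov function tending to zero. This holds because $H=\tfrac12\norm w^2-(U_{\bk}(s)-U_0)\to0$ by Proposition~\ref{prop;asym-col}. The second point is ensuring that the critical set stays away from $\Delta_{\bk}$, so that $V$ is analytic near $\mathcal C$. This follows from $U_{\bk}(s)\to U_0<\infty$.
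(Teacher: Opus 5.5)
Your proposal is correct and is the paper's argument: apply Theorem~\ref{thm;finite-length-analytic-gradient-equation} with $y=s$, $u=w$, $V=U_{\bk}$, $\mathcal C=\Omega$ and $a=v/2$, using the negative-sign case of (iii) since $v_0<0$. You spell out the hypothesis checks that the paper calls ``easily checked,'' namely the exponential decay of $r$ derived from $r'=rv$ and the change of variables $\norm{w}\,d\tau=\norm{\dot s}\,dt$.
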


\begin{proof} 
Let $\Omega$ be the compact accumulation set furnished by Proposition~\ref{prop;asym-col}. It consists of critical points of $U_{\bk}$ with common value $U_0=v_0^2/2$. Apply Theorem~\ref{thm;finite-length-analytic-gradient-equation} with $y=s$, $u=w$, $V=U_{\bk}$, $\mathcal C=\Omega$, and $a=v/2$. It can be easily checked that the hypotheses are satisfied after sufficiently large $\tau_0$. Then we obtain $\int\norm w\,d\tau<\infty$ and $s\to s^*\in\Omega$.
\end{proof}

Theorem \ref{thm;coll} then follows directly from the above result.


\section{No infinite spin for collision solutions} \label{sec:para-sol}

In this section we will show there is no infinite spin for parabolic solutions. The approach is similar to collision solutions and we will not repeat some of the details here. 

Let $q: [0, \infty) \to \be$, be a $\bk$-parabolic solution, with $\bk=\{1, \dots, k\}$ and $2 \le k \le n$. Like before $z(t)$ shall denote the motion of $\bk$-subsystem relevant to $c_{\bk}$. Similarly we can introduce a McGehhe-type coordinate $(u, v, s, w)$ and time parameter with $u=r^{-1/2}$ and the rest just as in the previous section. Let $\gamma(\tau)=(u,v,s,w)(\tau)$ be the corresponding solution of $(z, \dot{z})(t)$ the in the new coordinates and time variable. The next result can be proven similarly as Proposition~\ref{prop;MaC-col-eq}. 

\begin{proposition}\label{prop;MaC-para-eq}
$\gamma(\tau)$ satisfies the time-dependent equations

\begin{equation}  \label{blowup-para}
\begin{aligned}
u'&=-\frac{1}{2}uv,\\
 v'&=\tfrac12v^2+\norm w^2-U(s)+u^2g(\tau), \\
 s'&=w,\\
 w'&=\nabla_{S_{\bk}} U_{\bk}(s)-\tfrac12vw+u^2p(\tau)-\norm w^2s.
\end{aligned}
\end{equation}
where $p=r^3\bigl(f-\ip{s}{f}s\bigr)$ and $g=r^3\ip{s}{f}$.

Moreover the following identities hold 
\begin{equation}
    D_{\tau}w=\nabla_{S_{\bk}}U_{\bk}(s)-\tfrac12vw+e(\tau) \; \text{ where } \; e=r^2\bigl(f-\ip{s}{f}s\bigr);
\end{equation}
\begin{equation}\label{eq;scaled-energy}
 h_{\bk}=u^2[\tfrac12(v^2+\norm w^2)-U_{\bk}(s)];
\end{equation}
\begin{equation}\label{eq;v-mono}
 v'=\tfrac12\norm w^2+u^{-2}h_{\bk}+u^2g(\tau).
\end{equation}
\end{proposition}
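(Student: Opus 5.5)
The plan is to repeat the computation in the proof of Proposition~\ref{prop;MaC-col-eq}, now with the parabolic time scaling. The natural choice is $d\tau/dt=r^{-3/2}=u^{3}$ together with $v=r^{1/2}\dot r$ and $w=r^{3/2}\dot s$, exactly as in \eqref{eq;McG-col}; the only genuinely new variable is $u=r^{-1/2}$. All the purely kinematic work carries over verbatim. This covers the identity
$$\ddot r=r\norm{\dot s}^2-r^{-2}U_{\bk}(s)+\ip{s}{f},$$
which comes from differentiating $r^2=\ip zz$ twice and using homogeneity of $U_{\bk}$. It also covers the formula for $D_t\dot s$, obtained by projecting $\ddot s$ tangentially to $S_{\bk}$. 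Neither computation depends on whether $r\to0$ or $r\to\infty$. What changes is how the perturbation terms are expressed once everything is rewritten in $\tau$: they should be written in terms of $u$ rather than $r$.

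The key steps, in order, are as follows.
\begin{enumerate}
\item Compute $u'=r^{3/2}\frac{d}{dt}(r^{-1/2})=-\tfrac12 r^{3/2}r^{-3/2}\dot r=-\tfrac12 r^{-1/2}\cdot r^{1/2}\dot r\cdot r^{-1/2}\cdots$, i.e.\ $u'=-\tfrac12 r\dot r\,r^{-1/2}r^{-1}=-\tfrac12 uv$. This gives the first equation of \eqref{blowup-para}.
\item From the collision computation, $v'=\tfrac12v^2+\norm w^2-U_{\bk}(s)+r^2\ip sf$. Rewrite the forcing term as $r^2\ip sf=r^{-1}\cdot r^3\ip sf=u^2g$, which is the second equation.
\item Similarly $D_\tau w=\nabla_{S_{\bk}}U_{\bk}(s)-\tfrac12vw+r^2(f-\ip sf s)$. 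Here $r^2(f-\ip sf s)=u^2p$, so this term equals $e$ as stated. Adding back the normal component $-\norm w^2 s$ of $w'$ then gives the fourth equation, and $s'=r^{3/2}\dot s=w$ is immediate.
\item For the energy identity \eqref{eq;scaled-energy}, use $\dot z=r^{-1/2}(vs+w)$ with $\ip sw=0$. This gives $K_{\bk}=\tfrac12 r^{-1}(v^2+\norm w^2)$, and $U_{\bk}(z)=r^{-1}U_{\bk}(s)$ by homogeneity. Multiplying by $r^{-1}=u^2$ yields $h_{\bk}=u^2[\tfrac12(v^2+\norm w^2)-U_{\bk}(s)]$.
\item Finally, \eqref{eq;v-mono} follows by substituting $U_{\bk}(s)=\tfrac12(v^2+\norm w^2)-u^{-2}h_{\bk}$ into the $v'$ equation.
\end{enumerate}

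There is no real obstacle here; the computation is routine. The only step needing care is bookkeeping of the powers of $r$ when converting $r^2(\cdot)$ into $u^2\cdot r^3(\cdot)$, so that $p$, $g$ and $e$ are consistent. One point to flag: the statement lists $e=r^2(f-\ip sf s)$, which coincides with $u^2p$, so no contradiction arises. I would also note, for later use, that Lemma~\ref{lem;est-ft} gives $r^3\norm f=O(r^4t^{-3})=O(t^{-1/3})$, which keeps $p$ and $g$ bounded.
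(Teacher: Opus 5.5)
Your proposal is correct and follows the paper's route: the paper simply says the result is proven like Proposition~\ref{prop;MaC-col-eq}, with the same $\tau$, $v$, $w$ as in \eqref{eq;McG-col} and the forcing terms regrouped as $r^2(\cdot)=u^2\,r^3(\cdot)$, and your steps 2--5 check out. The only blemish is cosmetic: the intermediate expressions in step 1 are garbled (for instance, $-\tfrac12 r\dot r\,r^{-1/2}r^{-1}=-\tfrac12 r^{-1/2}\dot r\neq-\tfrac12 uv$), and the clean chain is $u'=r^{3/2}\cdot\bigl(-\tfrac12 r^{-3/2}\dot r\bigr)=-\tfrac12\dot r=-\tfrac12\,r^{-1/2}\cdot r^{1/2}\dot r=-\tfrac12 uv$.
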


Combining Definition~\ref{def;k-para} and Lemma~\ref{lem;est-ft}, we can prove the following uniform estimates.
\begin{lemma}
When $t$ tends to infinity,
$$
h_{\bk}(t)=O(t^{-5/3}),\quad g(t)=O(t^{-1/3}),\quad p(t)=O(t^{-1/3}).
$$
\end{lemma}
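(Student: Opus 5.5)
We need estimates as $t\to\infty$: $h_{\bk}(t)=O(t^{-5/3})$, $g=r^3\ip{s}{f}=O(t^{-1/3})$ and $p=r^3(f-\ip{s}{f}s)=O(t^{-1/3})$. The plan is to read $g$ and $p$ directly off Lemma~\ref{lem;est-ft}, and to obtain the energy bound by integrating $\dot h_{\bk}$ backwards from infinity, after first showing $h_{\bk}\to0$.

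\textbf{Step 1: $g$ and $p$.} By Definition~\ref{def;k-para} and the pair identity \eqref{eq;pair-identity}, $r(t)\sim t^{2/3}$, so $r^3\sim t^2$. Lemma~\ref{lem;est-ft} gives $\norm{f}\le C r t^{-3}=O(t^{-7/3})$. Since $\norm{s}=1$, we have $|\ip{s}{f}|\le\norm f$ and $\norm{f-\ip{s}{f}s}\le 2\norm f$. Hence
\[
|g|,\ \norm{p}\le C r^3\norm f=O(t^{2}\cdot t^{-7/3})=O(t^{-1/3}).
\]

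\textbf{Step 2: $\dot h_{\bk}$ and the velocity.} As in Lemma~\ref{lem;est-energy-col}, $\dot h_{\bk}=-\ip{f}{\dot z}$, so it suffices to show $\norm{\dot z}=O(t^{-1/3})$. Then $|\dot h_{\bk}|\le\norm f\,\norm{\dot z}=O(t^{-8/3})$. To bound $\dot z$, write $K_{\bk}=h_{\bk}+U_{\bk}$. Since all mutual distances in $\bk$ are $\ge C_1t^{2/3}$, we get $U_{\bk}=O(t^{-2/3})$. The key point is then to show $h_{\bk}$ is bounded and in fact tends to $0$. Boundedness follows from $\dot h_{\bk}=-\ip{f}{\dot z}$ together with the crude bound $\norm{\dot z}\le\sqrt{2(|h_{\bk}|+U_{\bk})}$: this gives the differential inequality $|\dot h|\le Ct^{-7/3}\sqrt{2|h|+1}$, which is integrable, so $h_{\bk}$ is bounded and has a limit $h_\infty$.

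\textbf{Step 3: $h_\infty=0$.} If $h_\infty>0$, then $K_{\bk}$ stays bounded below, and the Lagrange--Jacobi identity $\ddot I_{\bk}=4K_{\bk}-2U_{\bk}+2\ip{z}{f}$ gives $\ddot I_{\bk}\ge c>0$. This forces $I_{\bk}\gtrsim t^2$, contradicting $r\sim t^{2/3}$. If $h_\infty<0$, then $U_{\bk}\ge -h_{\bk}$ is bounded below, contradicting $U_{\bk}=O(t^{-2/3})$. Hence $h_{\bk}\to0$, so $K_{\bk}=O(t^{-2/3})$ and $\norm{\dot z}=O(t^{-1/3})$.

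\textbf{Step 4: conclusion.} Integrating $\dot h_{\bk}$ from $t$ to $\infty$ gives
\[
|h_{\bk}(t)|\le\int_t^\infty O(\sigma^{-8/3})\,d\sigma=O(t^{-5/3}).
\]

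The main obstacle is Step 3: establishing $h_\infty=0$, which is what upgrades the crude velocity bound to $O(t^{-1/3})$. The positive-limit case must be handled by the convexity of $I_{\bk}$ via Lagrange--Jacobi; the negative-limit case contradicts the decay of $U_{\bk}$.
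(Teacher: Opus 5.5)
Your argument is correct in substance, and it is more complete than the paper's. The paper only asserts that the lemma follows by combining Definition~\ref{def;k-para} and Lemma~\ref{lem;est-ft}. Those two ingredients give $g$ and $p$ exactly as in your Step 1. By themselves, however, they give no control of $\dot z$ and no reason why $h_{\bk}\to 0$, and the bound $h_{\bk}=O(t^{-5/3})$ requires both. In the collision case (Lemma~\ref{lem;est-energy-col}) that role is played by Sundman's estimate on $K_{\bk}$. Your Step 3 supplies exactly the missing piece: Lagrange--Jacobi excludes $h_\infty>0$, and the decay $U_{\bk}=O(t^{-2/3})$ excludes $h_\infty<0$. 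In the first case, note explicitly that the forcing term satisfies $2\ip{z}{f}=O(r^2t^{-3})=O(t^{-5/3})\to 0$, so $\ddot I_{\bk}\ge c>0$ really holds.

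One step does not follow as written. From $h_{\bk}\to0$ and $U_{\bk}=O(t^{-2/3})$ you cannot yet conclude $K_{\bk}=h_{\bk}+U_{\bk}=O(t^{-2/3})$, because you have no rate for $h_{\bk}$. Insert one bootstrap pass. Boundedness of $h_{\bk}$ gives $\norm{\dot z}$ bounded, so $|\dot h_{\bk}|\le\norm{f}\,\norm{\dot z}\le Ct^{-7/3}$. Integrating from $t$ to $\infty$ with $h_\infty=0$ then gives $h_{\bk}=O(t^{-4/3})$. Only at that point do you get $K_{\bk}=O(t^{-2/3})$ and $\norm{\dot z}=O(t^{-1/3})$, after which your Step 4 upgrades the bound to $h_{\bk}=O(t^{-5/3})$.
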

Now repeating the proof of Proposition~\ref{prop;asym-col}, we obtain a similar result for parabolic solutions.
\begin{proposition}\label{prop;asym-para}
Suppose $q(t)$ is a $\bk$-parabolic solution for $t\in[0,\infty)$ and let $\gamma(\tau)=(r, v, s, w)(\tau)$ be the corresponding orbit of the $\bk$-subsystem in MaGehee-type coordinate. Then when $\tau$ tends to infinity, there exists a positive constant $v_0$ such that
$$
 v\to v_0,\quad w\to0,\quad
 U_{\bk}(s)\to U_0=\tfrac12v_0^2,\quad
 \nabla_{S_{\bk}} U_{\bk}(s)\to0.
$$
In particular the limiting set of $s(\tau)$, denoted by $\Omega$, is a nonempty compact subset of critical points of $U_{\bk}$ in $\hat{S}_{\bk}$ with $U_{\bk}|_\Omega=U_0$.
\end{proposition}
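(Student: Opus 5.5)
I will mirror the proof of Proposition~\ref{prop;asym-col}, using the parabolic equations \eqref{blowup-para}, \eqref{eq;scaled-energy} and \eqref{eq;v-mono} together with the lemma giving $h_{\bk}=O(t^{-5/3})$ and $g,p=O(t^{-1/3})$.

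\textbf{Step 1: time change and the size of $u$.} By Definition~\ref{def;k-para} and the pair identity \eqref{eq;pair-identity}, $r\sim t^{2/3}$. Hence $u=r^{-1/2}\sim t^{-1/3}$. The time change $d\tau/dt=r^{-3/2}\sim t^{-1}$ gives $\tau\sim\log t$, so $\gamma$ is defined on $[\tau_0,\infty)$. Also $t\le Ce^{c\tau}$ and $t\ge c'e^{\kappa'\tau}$ for suitable positive constants.

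\textbf{Step 2: boundedness of $s$, $v$, $w$.} Write $u^{-2}h_{\bk}=rh_{\bk}=O(t^{2/3}t^{-5/3})=O(t^{-1})$ and $u^2g=O(t^{-2/3})$. In $\tau$ both decay exponentially, hence both are integrable on $[\tau_0,\infty)$. Next, $U_{\bk}(s)=rU_{\bk}(z)$. Every mutual distance in $\bk$ is comparable to $t^{2/3}\sim r$, so $U_{\bk}(s)=O(1)$ and $s$ stays in a compact subset of $S_{\bk}$ away from $\Delta_{\bk}$. Then \eqref{eq;scaled-energy} gives
\[
\tfrac12(v^2+\norm{w}^2)=U_{\bk}(s)+rh_{\bk}=O(1),
\]
so $v$ and $w$ are bounded.

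\textbf{Step 3: limits of $v$, $w$ and $U_{\bk}(s)$.} By \eqref{eq;v-mono}, the function
\[
v(\tau)-\int_{\tau_0}^{\tau}\bigl(u^{-2}h_{\bk}+u^2g\bigr)\,d\eta
\]
is bounded and nondecreasing. Therefore $v\to v_0$ and $\int_{\tau_0}^{\infty}\norm{w}^2\,d\tau<\infty$. From the $w'$ equation, $w'$ is bounded, so $w$ is uniformly continuous and the integral bound forces $w\to0$. Since $r=u^{-2}\to\infty$, we have $r'/r=v$. If $v_0<0$ then $r\to0$, and if $v_0=0$ then $r$ is subexponential in $\tau$; both contradict $r\sim t^{2/3}\sim e^{(2/3)\tau\cdot(\text{const})}$. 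So $v_0>0$. Passing to the limit in \eqref{eq;scaled-energy} with $rh_{\bk}\to0$ and $w\to0$ gives $U_{\bk}(s)\to\tfrac12 v_0^2$.

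\textbf{Step 4: the limit set consists of critical points.} This is the step needing the most care. The remaining task is to show $\nabla_{S_{\bk}}U_{\bk}(s)\to0$. The $w'$ equation is nonautonomous, but its perturbation $u^2p=O(t^{-1})$ decays exponentially in $\tau$. So the limit set of $(u,v,s,w)$ lies in $\{u=0,\,v=v_0,\,w=0\}$ and is invariant under the limiting autonomous flow. Invariance within $\{w=0\}$ forces $w'=0$ there, hence $\nabla_{S_{\bk}}U_{\bk}=0$ on the limit set. To avoid invoking invariance for a nonautonomous system, I would instead argue directly. Suppose $\rnorm{\nabla_{S_{\bk}}U_{\bk}(s(\tau_j))}\ge\delta$ along some sequence $\tau_j\to\infty$. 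Since $s'=w\to0$, the gradient stays above $\delta/2$ on intervals of fixed length around each $\tau_j$. On those intervals, $w'=\nabla_{S_{\bk}}U_{\bk}(s)+o(1)$, so $w$ changes by a definite amount, contradicting $w\to0$. Compactness of the limit set of $s$ then follows from Step 2, which gives the claim about $\Omega$.
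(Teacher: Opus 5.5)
Your proposal is correct and follows the paper's route. The paper simply repeats the collision-case argument: bound $U_{\bk}(s)$, use the scaled energy identity to bound $v$ and $w$, use the monotone quantity from \eqref{eq;v-mono} to get $v\to v_0$ and $\int\norm{w}^2<\infty$, use the growth of $r$ for the sign of $v_0$, and use an invariance argument for $\nabla_{S_{\bk}}U_{\bk}(s)\to0$. Your direct replacement of that invariance step sensibly avoids the nonautonomous issue, but it needs the vector $\nabla_{S_{\bk}}U_{\bk}(s(\eta))$ to stay close to $\nabla_{S_{\bk}}U_{\bk}(s(\tau_j))$ on $[\tau_j,\tau_j+1]$, not merely its norm to stay above $\delta/2$; this follows from $\int_{\tau_j}^{\tau_j+1}\norm{w}\,d\eta\to0$ and uniform continuity of the gradient on the compact set containing $s$.
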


Just like the proof of Proposition \ref{thm;no-spin-col}, the next result follows from Proposition~\ref{prop;asym-para} and Theorem~\ref{thm;finite-length-analytic-gradient-equation}.

\begin{proposition}
\label{thm;no-spin-par}
Under the same notation and assumption of Proposition \ref{prop;asym-para}, there exists a normalized CC $s^*\in S_{\bk}$ such that
\begin{equation}\label{no-spin-pra}
 \int_{t_0}^{\infty}\norm{\dot s(t)}\,dt<\infty,
 \qquad s(t)\rightarrow s^* \text{ as } t \rightarrow \infty. 
\end{equation}
 \end{proposition}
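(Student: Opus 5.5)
The plan is to argue exactly as in the proof of Proposition~\ref{thm;no-spin-col}: apply Theorem~\ref{thm;finite-length-analytic-gradient-equation} on the Riemannian manifold $S_{\bk}$ with the mass metric. We take
\[
y=s,\quad u=w,\quad V=U_{\bk},\quad \mathcal C=\Omega,\quad a=v/2,\quad e=u^2p=r^2\bigl(f-\ip{s}{f}s\bigr).
\]
By Proposition~\ref{prop;MaC-para-eq}, the pair $(s,w)$ solves \eqref{eq;sec-order-equation}. Here $U_{\bk}$ is real analytic on the open subset $S_{\bk}\setminus\Delta_{\bk}$, and $s(\tau)$ stays in a compact part of it because $U_{\bk}(s)$ is bounded. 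By Proposition~\ref{prop;asym-para}, $\Omega$ is a compact set of critical points on which $U_{\bk}\equiv U_0$. So everything reduces to checking hypotheses $(i)$--$(iv)$ for $\tau\ge\tau_0$ large.

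The first three hypotheses come straight from Proposition~\ref{prop;asym-para}.
\begin{enumerate}
\item[$(i)$] $\dist(s(\tau),\Omega)\to0$ holds by the definition of $\Omega$ as the limit set and compactness.
\item[$(ii)$] $w\to0$ and $\nabla_{S_{\bk}}U_{\bk}(s)\to0$, so the sum of their norms is at most $1$ once $\tau_0$ is large.
\item[$(iii)$] $v\to v_0>0$, so $a=v/2$ lies in $[v_0/4,\,v_0]$ for $\tau$ large; here $a$ has the positive sign.
\end{enumerate}

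The main work is hypothesis $(iv)$, the exponential decay of $e$ in $\tau$. Since $e$ is the tangential projection of $r^2 f$, we have $\norm{e}\le r^2\norm{f}$. By Lemma~\ref{lem;est-ft} and $r\sim t^{2/3}$ (from Definition~\ref{def;k-para} and \eqref{eq;pair-identity}),
\[
\norm{e}\le C r^3 t^{-3}\sim t^{2}\,t^{-3}=t^{-1}.
\]
Next we convert to $\tau$. The rescaling is $d\tau/dt=r^{-3/2}$, and $r^{-3/2}\sim t^{-1}$, so $c\,t^{-1}\le d\tau/dt\le C t^{-1}$. Integrating gives $\tau\le C\log t+C'$, hence $t\ge c'e^{\tau/C}$. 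Therefore $\norm{e(\tau)}\le c\,e^{-\kappa\tau}$ with $\kappa=1/C$. Enlarging $\tau_0$ also gives $\norm{e}\le1$. This step only uses the two-sided comparability $r\sim t^{2/3}$, so I expect no difficulty beyond bookkeeping of constants.

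Theorem~\ref{thm;finite-length-analytic-gradient-equation} then yields $\int_{\tau_0}^\infty\norm{w}\,d\tau<\infty$ and $s(\tau)\to s^*\in\Omega$. Undoing the time change, $\norm{w}\,d\tau=r^{3/2}\norm{\dot s}\,r^{-3/2}\,dt=\norm{\dot s}\,dt$. This gives $\int_{t_0}^\infty\norm{\dot s(t)}\,dt<\infty$. Moreover $\tau\to\infty$ as $t\to\infty$, because $d\tau/dt\ge c\,t^{-1}$ is not integrable. Hence $s(t)\to s^*$. Finally, $s^*\in\Omega$ is a critical point of $U_{\bk}$ on $S_{\bk}$, i.e.\ a normalized CC, which proves \eqref{no-spin-pra}.
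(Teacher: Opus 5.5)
Your proposal is correct and follows the paper's route: the paper proves this by applying Theorem~\ref{thm;finite-length-analytic-gradient-equation} with $y=s$, $u=w$, $V=U_{\bk}$, $\mathcal C=\Omega$ and $a=v/2$, exactly as in the collision case, with Proposition~\ref{prop;asym-para} supplying hypotheses $(i)$--$(iii)$. Your explicit check of $(iv)$ fills in the detail the paper leaves implicit: you use $\norm{e}\le Cr^3t^{-3}=O(t^{-1})$ together with $d\tau/dt=r^{-3/2}\sim t^{-1}$, which gives $t^{-1}\le Ce^{-\kappa\tau}$.
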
 

Finally Theorem \ref{thm;para} follows directly from the last proposition.

\hfill\newline
\bibliographystyle{abbrv}
\bibliography{Ref-Spin}

@article {WY25,
    AUTHOR = {Wang, Zhe and Yu, Guowei},
     TITLE = {THE PROBLEM OF INFINITE SPIN FOR PARABOLIC AND COLLISION 
SOLUTIONS IN THE PLANAR $n$-BODY PROBLEM},  
      YEAR = {arXiv:2507.05801, (2025)},
       
}

@article {PZ26,
    AUTHOR = {Pinzari, Gabriella and Zgliczynski, Piotr},
     TITLE = {No infinite spin for total collisions in the spatial N-body problem},  
      YEAR = {arXiv: 2604.22172, (2026)},
       
}

@article {YZ26,
    AUTHOR = {Yu, Xiang and Zhao, Lei},
     TITLE = {Exclusion of Infinite Spin for N-body problem in Rd},
      YEAR = {arXiv:2606.31196v1, (2026)},       
}

@article {AK12,
    AUTHOR = {Albouy, Alain and Kaloshin, Vadim},
     TITLE = {Finiteness of central configurations of five bodies in the
              plane},
   JOURNAL = {Ann. of Math. (2)},
  FJOURNAL = {Annals of Mathematics. Second Series},
    VOLUME = {176},
      YEAR = {2012},
    NUMBER = {1},
     PAGES = {535--588},
      ISSN = {0003-486X,1939-8980},
   MRCLASS = {70F10 (05C62 32C25 70G10)},
  MRNUMBER = {2925390},
MRREVIEWER = {Josep\ M.\ Cors},
       DOI = {10.4007/annals.2012.176.1.10},
       URL = {https://doi.org/10.4007/annals.2012.176.1.10},
}

@article {GSZ24,
    AUTHOR = {Gierzkiewicz, Anna and Schaefer, Rodrigo G. and Zgliczy\'nski,
              Piotr},
     TITLE = {No {I}nfinite {S}pin for {P}artial {C}ollisions {C}onverging
              to {I}solated {C}entral {C}onfigurations on the {P}lane},
   JOURNAL = {Comm. Math. Phys.},
  FJOURNAL = {Communications in Mathematical Physics},
    VOLUME = {406},
      YEAR = {2025},
    NUMBER = {7},
     PAGES = {Paper No. 158},
      ISSN = {0010-3616,1432-0916},
   MRCLASS = {70F10 (37C50 37D10)},
  MRNUMBER = {4915736},
       DOI = {10.1007/s00220-025-05340-3},
       URL = {https://doi.org/10.1007/s00220-025-05340-3},
}

@article {Chazy22,
    AUTHOR = {Chazy, Jean},
     TITLE = {Sur l'allure du mouvement dans le probl\`eme des trois corps
              quand le temps cro\^{i}t ind\'{e}finiment},
   JOURNAL = {Ann. Sci. \'{E}cole Norm. Sup. (3)},
  FJOURNAL = {Annales Scientifiques de l'\'{E}cole Normale Sup\'{e}rieure. Troisi\`eme
              S\'{e}rie},
    VOLUME = {39},
      YEAR = {1922},
     PAGES = {29--130},
      ISSN = {0012-9593},
   MRCLASS = {DML},
  MRNUMBER = {1509241},
       URL = {http://www.numdam.org/item?id=ASENS_1922_3_39__29_0},
}

@article {CC24,
    AUTHOR = {Chang, Ke-Ming and Chen, Kuo-Chang},
     TITLE = {Toward finiteness of central configurations for the planar
              six-body problem by symbolic computations. ({I}) {D}etermine
              diagrams and orders},
   JOURNAL = {J. Symbolic Comput.},
  FJOURNAL = {Journal of Symbolic Computation},
    VOLUME = {123},
      YEAR = {2024},
     PAGES = {Paper No. 102277, 38},
      ISSN = {0747-7171,1095-855X},
   MRCLASS = {70F10 (68W30 70F15)},
  MRNUMBER = {4671620},
MRREVIEWER = {Jaime\ Burgos Garc\'ia},
       DOI = {10.1016/j.jsc.2023.102277},
       URL = {https://doi.org/10.1016/j.jsc.2023.102277},
}

@ARTICLE{FT04,
  author = {Ferrario, Davide L. and Terracini, Susanna},
  title = {{On the existence of collisionless equivariant minimizers for the
  classical {$n$}-body problem}},
  journal = {Invent. Math.},
  year = {2004},
  volume = {155},
  pages = {305--362},
  number = {2},
  coden = {INVMBH},
  doi = {10.1007/s00222-003-0322-7},
  fjournal = {Inventiones Mathematicae},
  issn = {0020-9910},
  mrclass = {70F10 (37J45 49J40 49S05 70F07 70F16 70H30)},
  mrnumber = {2031430 (2005b:70010)},
  mrreviewer = {Kuo-Chang Chen},
  url = {http://dx.doi.org.myaccess.library.utoronto.ca/10.1007/s00222-003-0322-7}
}

@article {HM06,
    AUTHOR = {Hampton, Marshall and Moeckel, Richard},
     TITLE = {Finiteness of relative equilibria of the four-body problem},
   JOURNAL = {Invent. Math.},
  FJOURNAL = {Inventiones Mathematicae},
    VOLUME = {163},
      YEAR = {2006},
    NUMBER = {2},
     PAGES = {289--312},
      ISSN = {0020-9910,1432-1297},
   MRCLASS = {70F15 (37N05 70F10)},
  MRNUMBER = {2207019},
MRREVIEWER = {Manuele\ Santoprete},
       DOI = {10.1007/s00222-005-0461-0},
       URL = {https://doi.org/10.1007/s00222-005-0461-0},
}

@incollection {Lj82,
    AUTHOR = {\L ojasiewicz, S.},
     TITLE = {Sur les trajectoires du gradient d'une fonction analytique},
 BOOKTITLE = {Geometry seminars, 1982--1983 ({B}ologna, 1982/1983)},
     PAGES = {115--117},
 PUBLISHER = {Univ. Stud. Bologna, Bologna},
      YEAR = {1984},
   MRCLASS = {58C05 (26E05 32C05 58C25)},
  MRNUMBER = {771152},
}

@article {MM25,
    AUTHOR = {Moeckel, Richard and Montgomery, Richard},
     TITLE = {No infinite spin for planar total collision},
   JOURNAL = {J. Amer. Math. Soc.},
  FJOURNAL = {Journal of the American Mathematical Society},
    VOLUME = {38},
      YEAR = {2025},
    NUMBER = {1},
     PAGES = {225--241},
      ISSN = {0894-0347,1088-6834},
   MRCLASS = {70F10 (37N05 70F15 70F16 70G40)},
  MRNUMBER = {4810063},
       DOI = {10.1090/jams/1044},
       URL = {https://doi.org/10.1090/jams/1044},
}

@article {Pl67,
    AUTHOR = {Pollard, Harry},
     TITLE = {The behavior of gravitational systems},
   JOURNAL = {J. Math. Mech.},
  FJOURNAL = {J. Math. Mech.},
    VOLUME = {17},
      YEAR = {1967/68},
     PAGES = {601--611},
   MRCLASS = {70.34},
  MRNUMBER = {261826},
MRREVIEWER = {F.\ Nahon},
       DOI = {10.1512/iumj.1968.17.17036},
       URL = {https://doi.org/10.1512/iumj.1968.17.17036},
}

@article {Sr71,
    AUTHOR = {Saari, Donald G.},
     TITLE = {Expanding gravitational systems},
   JOURNAL = {Trans. Amer. Math. Soc.},
  FJOURNAL = {Transactions of the American Mathematical Society},
    VOLUME = {156},
      YEAR = {1971},
     PAGES = {219--240},
      ISSN = {0002-9947,1088-6850},
   MRCLASS = {70.34},
  MRNUMBER = {275729},
MRREVIEWER = {C.\ Marchal},
       DOI = {10.2307/1995609},
       URL = {https://doi.org/10.2307/1995609},
}

@article {MS76,
    AUTHOR = {Marchal, Christian and Saari, Donald G.},
     TITLE = {On the final evolution of the {$n$}-body problem},
   JOURNAL = {J. Differential Equations},
  FJOURNAL = {Journal of Differential Equations},
    VOLUME = {20},
      YEAR = {1976},
    NUMBER = {1},
     PAGES = {150--186},
      ISSN = {0022-0396,1090-2732},
   MRCLASS = {70.34 (85.34)},
  MRNUMBER = {416150},
       DOI = {10.1016/0022-0396(76)90101-7},
       URL = {https://doi.org/10.1016/0022-0396(76)90101-7},
}

@article {Sm98,
    AUTHOR = {Smale, Steve},
     TITLE = {Mathematical problems for the next century},
   JOURNAL = {Math. Intelligencer},
  FJOURNAL = {The Mathematical Intelligencer},
    VOLUME = {20},
      YEAR = {1998},
    NUMBER = {2},
     PAGES = {7--15},
      ISSN = {0343-6993,1866-7414},
   MRCLASS = {01A67 (00A05)},
  MRNUMBER = {1631413},
       DOI = {10.1007/BF03025291},
       URL = {https://doi.org/10.1007/BF03025291},
}

@article {Sp70,
    AUTHOR = {Sperling, Hans J.},
     TITLE = {On the real singularities of the {$N$}-body problem},
   JOURNAL = {J. Reine Angew. Math.},
  FJOURNAL = {Journal f\"ur die Reine und Angewandte Mathematik. [Crelle's
              Journal]},
    VOLUME = {245},
      YEAR = {1970},
     PAGES = {15--40},
      ISSN = {0075-4102,1435-5345},
   MRCLASS = {70.34},
  MRNUMBER = {290630},
MRREVIEWER = {F.\ V.\ Pohle},
       DOI = {10.1515/crll.1970.245.15},
       URL = {https://doi.org/10.1515/crll.1970.245.15},
}

\end{document}